\definecolor{red}{rgb}{0.9,0,0}
\definecolor{green}{rgb}{0,0.9,0}
\definecolor{blue}{rgb}{0,0,0.9}
\def\grad{\nabla}
\def\grad{\nabla}
\def\gph{\mbox{\rm gph}\,}
\def\dom{\mbox{\rm dom}\,}
\def\ox{\overline{x}}
\def\tto{\rightrightarrows}
\def\Bar{\overline}
\def\ve{\varepsilon}
\def\epsilon{\varepsilon}
\def\ox{\bar{x}}
\def\emp{\emptyset}
\def\Limsup{\mathop{{\rm Lim}\,{\rm sup}}}
\newtheorem{theorem}{Theorem}[section]
\newtheorem{proposition}{Proposition}[section]
\newtheorem{lemma}{Lemma}[section]
\newtheorem{definition}{Definition}[section]
\newtheorem{assumption}{Assumption}[section]
\newtheorem{example}{Example}[section]
\begin{document}

\title{\bf Characterizations of Strong Variational Sufficiency\\ in General Models of Composite Optimization}

\author{Boris S. Mordukhovich\footnote{Department of Mathematics, Wayne State University, Detroit, MI 48202 (aa1086@wayne.edu). Research of this author was partly supported by the US National Science Foundation under grant DMS-2204519, by the Australian Research Council under Discovery Project DP-190100555, and by Project 111 of China under grant D21024.} \quad Peipei Tang\footnote{School of Computer and Computing Science, Hangzhou City University, Hangzhou, China (tangpp@hzcu.edu.cn).}\quad Chengjing Wang\footnote{School of Mathematics, Southwest Jiaotong University, Chengdu, China (renascencewang@hotmail.com).}}

\maketitle

\begin{abstract}

This paper investigates a recently introduced notion of strong variational sufficiency in optimization problems whose importance has been highly recognized in optimization theory, numerical methods, and applications. We address a general class of composite optimization problems and establish complete characterizations of strong variational sufficiency for their local minimizers in terms of a generalized version of the strong second-order sufficient condition (SSOSC) and the positive-definiteness of an appropriate generalized Hessian of the augmented Lagrangian calculated at the point in question. The generalized SSOSC is expressed via a novel second-order variational function, which reflects specific features of nonconvex composite models. The imposed assumptions describe the spectrum of composite optimization problems covered by our approach while being constructively implemented for nonpolyhedral problems that involve the nuclear norm function  and the indicator function of the positive-semidefinite cone without any constraint qualifications. 
\end{abstract}
\begin{keywords}
variational analysis, composite optimization, strong variational sufficiency, generalized differentiation, second-order characterizations, parabolic regularity
\end{keywords}\vspace*{0.1in}
{\bf Mathematics Subject Classification (2020)} 49J52, 49J53, 90C31

\section{Introduction}\label{intro}

The notion of {\em variational} and {\em strong variational sufficiency} for local minimizers of general optimization problems written in the unconstrained extended-real-valued format  was introduced by Rockafellar \cite{Rockafellar2019a}. It is closely related to Rockafellar's notion of (strong) {\em variational convexity} of lower semicontinuous (l.s.c.) functions \cite{Rockafellar2019} meaning that the subdifferential graph of the function in question locally behaves as the subgradient mapping associated with some (strongly) convex l.s.c.\ function. Both notions of variational sufficiency and variational convexity, as well as their strong counterparts, reflect ``primal-dual" perspectives of local optimality. These properties enhance our understanding of optimization models by demonstrating how their objective functions can exhibit convex-type features even in more complex settings. Moreover, they provide a robust theoretical foundation for the development of numerical algorithms to efficiently solve nonsmooth and nonconvex optimization problems.

In this paper, we mainly concentrate on the study of strong variational sufficiency for local minimizers of the following problem of (constrained) {\em composite optimization}
\begin{align*}
(COP)\qquad\min_{x\in\mathcal{R}^{n}} f_{0}(x)+g(F(x)) 
\end{align*}
under the {\em standing assumptions}: $f_{0}:\mathcal{R}^{n}\rightarrow\mathcal{R}$ and $F:\mathcal{R}^{n}\rightarrow\mathcal{R}^{m}$ are ${\cal C}^2$-smooth mappings, and  $g:\mathcal{R}^{m}\rightarrow \overline{\mathcal{R}}:=\mathcal{R}\cup\{\infty\}$ is an l.s.c.\  convex function. The class of optimization problems in form (COP) is very broad including linear and nonlinear programs, second-order cone and positive-semidefinite programs, etc. In the {\em nondegenerate} case where the Jacobian matrix of $F$ is of full rank at the reference point, the 
second-order characterization of strong variational sufficiency for local optimality in COPs is obtained in \cite{Mordukhovich2023}. In the (polyhedral) case of nonlinear programs (NLPs), the aforementioned nondegeneracy condition reduces to the classical linear independence constraint qualification (LICQ), and the characterization of \cite{Mordukhovich2023} is explicitly formulated via the classical {\em strong second-order sufficient condition} (SSOSC). We also mention the abstract {\em quadratic bundle} characterizations in \cite{Rockafellar2023} formulated in the SSOSC form  as well in the case of NLPs. For the class of (nonpolyhedral) nonlinear semidefinite programs (SDPs), characterizations of strong variational sufficiency were developed in \cite{DingChao2023} via the corresponding version of SSOSC. 
Moreover, the obtained results for strong variational sufficiency were used for establishing local convergence rates for numerical algorithms of the proximal, augmented Lagrangian, and Newtonian types; see \cite{Khanh2025,Rockafellar2019a,Rockafellar2022,DingChao2023}. Major {\em open questions} remain about explicit characterizations of strong variational sufficiency for general nonpolyhedral composite optimization problems in terms of their given data, and particularly about the equivalence between strong variational sufficiency for local optimality in (COP) and appropriate versions of SSOSC. 

This paper develops an efficient approach to resolve such issues based on advanced machinery of variational analysis and second-order generalized differentiation. Our main goal is to reveal general assumptions on the initial data of (COP) that ensure the desired equivalence of strong variational sufficiency for local optimality to the newly formulated SSOSC and the related positive-definiteness of a certain generalized Hessian of the augmented Lagrangian function associated with (COP) without any constraint qualification. Furthermore,  we specifically illustrate the fulfillment of the most involved assumption in our analysis with constructive calculations in the two major nonpolyhedral cases: when $g$ is either the indicator function of the positive-semidefinite cone, or the nuclear norm function. The obtained calculations lead us to explicit characterizations of strong variational sufficiency in the corresponding COPs, where the crucial case of nuclear norm-based problems has never been considered in the literature.
 
The rest of the paper is organized as follows. Section~\ref{sec:prel} presents the needed preliminaries from variational analysis and generalized differentiation together with the formulation and discussion of the major assumptions on the data of (COP) to provide desired characterizations. In Section~\ref{sec:prox}, we evaluate the limiting coderivative of the proximal mapping that is broadly used in what follows. Section~\ref{sec:ssosc} introduces a novel second-order variational function, which is used in the formulation and investigation of the generalized SSOSC for the composite problem (COP). Section~\ref{sec:char} contains the main results on characterizing strong variational sufficiency for (COP).  The concluding remarks and some topics of the future research are discussed in Section~\ref{sec:conc}. Finally, Appendix~\ref{appendix-A} presents explicit calculations of all the ingredients involving into the major assumptions and characterizations for the SDP cone and nuclear norm functions in (COP).\vspace*{0.05in}

Throughout the  paper, we use the standard notation of finite-dimensional variational analysis and generalized differentiation; see, e.g., \cite{Mordukhovich2018,Rockafellar1998}. Recall that the closed unit ball in $\mathcal{R}^{m}$ is denoted by $\mathbb{B}_{\mathcal{R}^{m}}$, while $\mathbb{B}(x,\varepsilon):=\{u\in\mathcal{R}^{m}\, |\, \|u-x\|\leq\varepsilon\}$ for some $x\in\mathcal{R}^{m}$ and $\varepsilon>0$. For a set $C\subset\mathcal{R}^{m}$, the indicator function $\delta_{C}$ of $C$ is defined by $\delta_{C}(x):=0$ if $x\in C$ and $\delta_{C}(x):=\infty$. Given a point $x\in\mathcal{R}^{m}$, we use 
$\operatorname{\Pi}_{C}(x)$ to denote the projection of $x$ onto $C$.
For a sequence $\{x^{k}\}$, the notation $x^{k}{\stackrel{C}{\rightarrow}}x$ means that $x^{k}\rightarrow x$ with $x^{k}\in C$, while  
$x^{k}{\stackrel{N}{\rightarrow}}x$ indicates that $x^{k}\rightarrow x$ with $k\in N\subset \mathbb N:=\{1,2,\ldots\}$. The smallest cone containing $C$ is $\operatorname{pos}C=\{0\}\cup\{\lambda x\, |\, x\in C,\, \lambda>0\}$ with $\operatorname{pos}C:=\{0\}$ if $C=\emp$. The intersection of all the convex sets containing $C$ is called the convex hull of $C$ and is denoted by $\operatorname{conv}C$. For a nonempty closed convex cone $C$, the polar cone of $C$ is $C^{\circ}:=\{y\,|\, \langle x,y\rangle\leq 0\;\mbox{ for all }\;x\in C\}$, and the gauge of $C$ is $\gamma_{C}(x):=\inf\{\lambda\ge 0\,|\,x\in\lambda C\}$ provided that $0\in C$. The notation $\operatorname{Diag}(x)$ signifies the diagonal matrix with vector $x$ on the diagonal. Denote the $n\times n$ identity matrix as $I_{n}$, where the subscript is omitted if the dimension is evident from the context. For a given matrix $A\in\mathcal{R}^{m\times n}$, let $\operatorname{ker}(A):=\{x\in\mathcal{R}^{n}\,|\,Ax=0\}$, and let $A^{\dagger}$ stand for the Moore-Penrose inverse of $A$. Given index sets $\alpha\subset\{1,\ldots,m\}$ and $\beta\subset\{1,\ldots,n\}$, denote by $A_{\alpha\beta}$ the $|\alpha|\times|\beta|$ submatrix of $A$ obtained by removing all the rows of $A$ not in $\alpha$ and all the columns of $A$ not in $\beta$. For a set-valued mapping/multifunction $S\colon\mathcal{R}^{n}\tto\mathcal{R}^{m}$, the (Painlev\'e-Kuratowski)  outer limit of $S$ at $\ox$ is 
\begin{equation}\label{pk}
\Limsup_{x\to\ox}S(x):=
\big\{y\in\mathcal{R}^{m}\;\big|\;\exists\,x_k\to\ox,\;\exists\,y_k\to y\;\mbox{ with }\;y_k\in S(x_k)\big\},
\end{equation}
while the standard upper limit of scalars are denoted by `$\limsup$'. Finally, we use `$\subset$' to indicate that the set on the left-hand side is smaller than or equal to the one on the right.

\section{Major Preliminaries and Assumptions}\label{sec:prel}

In this section, we review fundamental concepts of variational analysis and generalized differentiation, which are extensively employed in the subsequent sections. For a comprehensive treatment of these topics, we refer the reader to the books \cite{Bonnans2000,Mordukhovich2006,Mordukhovich2024,Mordukhovich2018,Rockafellar1998}. 

Considering an extended real-valued function $r:\mathcal{R}^{m}\rightarrow\overline{\mathcal{R}}$, the effective {\em domain} and the {\em epigraph} of this function are defined by
\begin{align*}
\operatorname{dom}r:=\{x\in\mathcal{R}^{m}\,|\,r(x)<\infty\}\quad\mbox{and}\quad\operatorname{epi}r:=\{(x,y)\in\mathcal{R}^{m}\times\mathcal{R}\,|\,x\in\operatorname{dom}r,\,r(x)\leq y\},
\end{align*}
respectively.
The (Fenchel) {\em conjugate function} of $r$ at $x\in\mathcal{R}^{m}$ is given by
\begin{align*}
r^{*}(x):=\sup_{u\in\operatorname{dom}r}\left\{\langle x,u\rangle-r(u)\right\}.
\end{align*}
The function $r$ is called {\em proper} if $\operatorname{dom}r\neq\emptyset$. For a proper  l.s.c.\ function $r$ and a constant $\sigma>0$, the {\em Moreau envelope} $e_{\sigma r}$ and the {\em proximal mapping} $\operatorname{Prox}_{\sigma r}$ corresponding to $r$ and $\sigma$ are 
\begin{align}
e_{\sigma r}(x)&:=\inf_{u\in\mathcal{R}^{m}}\left\{r(u)+\frac{1}{2\sigma}\|u-x\|^{2}\right\},\label{moreau}\\
\operatorname{Prox}_{\sigma r}\label{prox}(x)&:=\mathop{\operatorname{\arg\min}}\limits_{u\in\mathcal{R}^{m}}\left\{r(u)+\frac{1}{2\sigma}\|u-x\|^{2}\right\}.
\end{align}
When $r$ is proper, l.s.c., and convex, we know from \cite[Theorem~2.26]{Rockafellar1998} that \eqref{moreau} is convex and continuously differentiable (${\cal C}^1$-smooth) with the gradient
\begin{align*}
\nabla e_{\sigma r}(x)=\frac{1}{\sigma}(x-\operatorname{Prox}_{\sigma r}(x)).
\end{align*}
Moreover, it follows in this case from \cite[Theorem~31.5]{Rockafellar1970} that \eqref{prox} is single-valued, Lipschitz continuous and satisfies the Moreau identity
\begin{align*}
\operatorname{Prox}_{\sigma r}(x)+\sigma\operatorname{Prox}_{\sigma^{-1}r^{*}}(\sigma^{-1}x)=x\;\mbox{ whenever }x\in\mathcal{R}^{m}.
\end{align*}

Given $r:\mathcal{R}^{m}\rightarrow\overline{\mathcal{R}}$ and $x\in\dom r$, 
the (Fr\'echet) {\em regular subdifferential} of $r$ at $x$ is 
\begin{align*}
\widehat{\partial}r(x):=\left\{y\in\mathcal{R}^{m}\,\left|\,r(x')\geq r(x)+\langle y,x'-x\rangle+o(\|x'-x\|)\right.\right\},
\end{align*}
while the (Mordukhovich)  {\em limiting subdifferential} of $r$ at $x$ can be equivalently defined by
\begin{align*}
\partial r(x):=\left\{y\in\mathcal{R}^{m}\,\left|\,\exists\,x^{k}\rightarrow x\;\mbox{ with }\;r(x^{k})\rightarrow r(x),\;y^{k}\in\widehat{\partial}r(x^{k}),\;y^{k}\rightarrow y\right.\right\}.
\end{align*}
The {\em subderivative}  of $r$ at $x\in\dom r$ in the direction $d\in\mathcal{R}^{m}$ is defined by
\begin{align}\label{subder}
dr(x)(d):=\liminf_{\stackrel{t\downarrow 0}{d'\rightarrow d}}\frac{r(x+td')-r(x)}{t}.
\end{align}
The {\em parabolic subderivative} of $r$ at $x$ relative to a vector $d\in\mathcal{R}^{m}$ for which \eqref{subder} is finite and to any vector $w\in\mathcal{R}^{m}$ is given by
\begin{align}\label{parab-sub}
d^{2}r(x)(d,w):=\liminf_{\stackrel{t\downarrow 0}{w'\rightarrow w}}\frac{r(x+td+\frac{1}{2}t^{2}w')-r(x)-tdr(x)(d)}{\frac{1}{2}t^{2}}.
\end{align}
The function $r$ is said to be {\em parabolically epi-differentiable} at $x$
for $d$ if $\operatorname{dom}d^{2}r(x)(d,\cdot)\neq\emptyset$
and for every $w\in\mathcal{R}^{m}$ and every $t_{k}\downarrow 0$, there exists $w_{k}\rightarrow w$ such that 
\begin{align*}
d^{2}r(x)(d,w)=\lim_{k\rightarrow\infty}\frac{r(x+t_{k}d+\frac{1}{2}t_{k}^{2}w_{k})-r(x)-t_{k}dr(x)(d)}{\frac{1}{2}t_{k}^{2}}.
\end{align*}
Given $t>0$, $x\in\operatorname{dom}r$, and $u\in\partial r(x)$, consider
\begin{align*}
\Delta_{t}^{2}r(x,u)(d):=\frac{r(x+t d)-r(x)-t\langle u,d\rangle}{\frac{1}{2}t^{2}}
\end{align*}
and define the {\em second-order subderivative} of $r$ at $x$ for $u$ and $d\in\mathcal{R}^{m}$ by
\begin{align}\label{2subder}
d^{2}r(x,u)(d):=\liminf_{{t\downarrow 0}\atop{d'\rightarrow d}}\Delta_{t}^{2}r(x,u)(d').
\end{align}
which can be equivalently written via the lower epi-limit
\begin{align*}
d^{2}r(x,u)=\mathop{\operatorname{e-lim\inf}}_{t\downarrow 0}\Delta_{t}^{2}r(x,u).
\end{align*}
We say that $r$ is {\em twice epi-differentiable} at $x$ for $u$ if $\Delta_{t}^{2}r(x,u)$ epi-converges to $d^{2}r(x,u)$ as $t\downarrow 0$.

\begin{definition}\label{par-reg}
Let $r:\mathcal{R}^{m}\rightarrow \overline{\mathcal{R}}$ be a proper function with $x\in\operatorname{dom}r$ and $u\in\partial r(x)$. It is said that $r$ is {\sc parabolically regular} at $x$ in the direction $d$ for $u$ if the relationship
\begin{align*}
\inf_{w\in\mathcal{R}^{m}}\left\{d^{2}r(x)(d,w)-\langle u,w\rangle\right\}=d^{2}r(x,u)(d)
\end{align*} 
holds for the second-order constructions \eqref{parab-sub} and \eqref{2subder}.
\end{definition}
The concept of parabolic regularity was first formulated in  \cite[Definition~13.65]{Rockafellar1998} (cf.\ also \cite[Definition~3.102]{Bonnans2000}) and has been comprehensively studied more recently in \cite{Mohammadi2020,Mohammadi2021,Mohammadi2022}.\vspace*{0.03in}

Given further an open set $\Omega\subset\mathcal{R}^{n}$ and a Lipschitz continuous mapping $S:\Omega\rightarrow\mathcal{R}^{m}$, recall by the classical Rademacher’s theorem (see, e.g., \cite[Lemma~1.83]{Mordukhovich2024} with a simple proof) that $S$ is differentiable almost everywhere in $\Omega$ and denote by $\Omega_{S}\subset\Omega$ the set of points where $S$ is differentiable. For any $x\in\Omega_{S}$, let $S'(x)\in\mathcal{R}^{m\times n}$ be the Jacobian matrix of $S$ at $x$ with the adjoint/transpose matrix denoted by $S'(x)^T$. The (Clarke) {\em generalized Jacobian} $JS(x)$ of $S$ at $x$ is the convex hull of the {\em limiting Jacobian} set
of $S$ at $x$ defined by
\begin{align}\label{Bsub}
\Bar\grad S(x):=\left\{V\in\mathcal{R}^{m\times n}\,\left|\,\exists\,  x^{k}{\stackrel{\Omega_{S}}{\longrightarrow}} x,\;S'(x^{k})\rightarrow V\right.\right\}.
\end{align}

Next we consider the tangent and normal cones to sets used in what follows. Given a set $C\subset\mathcal{R}^m$ and a point $x\in C$, the (Bouligand-Severi) {\em tangent cone} to $C$ at $x$ is defined by
\begin{align}\label{tan}
\mathcal{T}_{C}(x):=\Limsup_{t\downarrow 0}\frac{C-x}{t}=\left\{w\in\mathcal{R}^{m}\,\left|\,\exists\, t^{k}\downarrow 0,  x^{k}{\stackrel{C}{\longrightarrow}} x, \frac{x^{k}-x}{t^{k}}\rightarrow w \right.\right\}
\end{align}
via the outer limit \eqref{pk}. The (Fr\'echet) {\em regular normal cone} to $C$ at $x$ is equivalently given by
\begin{align}\label{rnc}
\widehat{\mathcal{N}}_{C}(x):=\left\{y\in\mathcal{R}^{m}\, \left|\, \limsup_{{x'{\stackrel{C}{\longrightarrow}} x}\atop{x'\neq x}}\frac{\langle y,x'-x\rangle}{\|x'-x\|}\leq 0\right.\right\}=\mathcal{T}_{C}^{\circ}(x),
\end{align}
and the (Mordukhovich) {\em limiting normal cone} to $C$ at $x$ is
\begin{align}\label{lnc}
\mathcal{N}_{C}(x):=\Limsup_{x'{\stackrel{C}{\longrightarrow}} x}\widehat{\mathcal{N}}_{C}(x'),
\end{align}
which is equivalent to the original definition 
\begin{align*}
\mathcal{N}_{C}(x)=\Limsup_{x'\rightarrow x}\left\{\operatorname{pos}(x'-\operatorname{\Pi}_{C}(x'))\right\}
\end{align*}
from \cite{MORDUKHOVICH1976} if $C$ is locally closed around $x$. Note that, despite the nonconvexity of the sets in \eqref{lnc}, the limiting normal cone and the associated constructions for functions and multifunctions enjoy {\em full calculus} in contrast to the convex-valued  regular normals \eqref{rnc} and the like. 

For a multifunction $S:\mathcal{R}^{n}\rightrightarrows\mathcal{R}^{m}$ with the domain $\dom S:=\{x\in\mathcal{R}^n\,|\,S(x)\ne\emp\}$ and the graph $\gph S:=\{(x,y)\in\mathcal{R}^n\times\mathcal{R}^m\,|\,y\in S(x)\}$, the following generalized derivatives of $S$ are defined via the corresponding tangent and normal cones \eqref{tan}--\eqref{lnc} to the graph.

\begin{definition}\label{def:cod}
Given a multifunction $S:\mathcal{R}^{n}\rightrightarrows\mathcal{R}^{m}$ and $x\in\operatorname{dom}S$, the {\sc graphical derivative} of $S$ at $x$ for $u\in S(x)$ is the mapping $DS(x,u):\mathcal{R}^{n}\rightrightarrows\mathcal{R}^{m}$ defined by
\begin{align}\label{gder}
z\in DS(x,u)(w)\quad\Longleftrightarrow\quad (w,z)\in\mathcal{T}_{\operatorname{gph}S}(x,u).
\end{align}
The {\sc regular coderivative} of $S$ at $x$ for $u\in S(x)$ is  defined by
\begin{align*}
z\in \widehat{D}^{*}S(x,u)(w)\quad\Longleftrightarrow\quad (z,-w)\in\widehat{\mathcal{N}}_{\operatorname{gph}S}(x,u).
\end{align*}
The {\sc limiting coderivative} of $S$ at $x$ for $u\in S(x)$ is defined by
\begin{align}\label{lim-cod}
z\in D^{*}S(x,u)(w)\quad\Longleftrightarrow\quad (z,-w)\in\mathcal{N}_{\operatorname{gph}S}(x,u).
\end{align}
When $S$ is single-valued at $x$, we skip $u$ in the notation above.
\end{definition}

If $S\colon\mathcal{R}^n\to\mathcal{R}^m$ is Lipschitz continuous around $\ox$, then there is the well-known relationship \cite{Mordukhovich2006} between the limiting coderivative and generalized Jacobian of $S$ at $x$:
\begin{equation}\label{cod-cl}
{\rm conv}D^*S(x)(w)=\big\{A^Tw\,\big|\;A\in JS(x)\big\}\;\mbox{ for all }\;w\in\mathcal{R}^m.
\end{equation}

Let us further mention the following simple albeit useful relationships for an arbitrary set-valued mapping  $S:\mathcal{R}^{n}\rightrightarrows\mathcal{R}^{m}$ and its inverse $S^{-1}$:
\begin{align}\label{eq:inverse-graphical-derivative}
\begin{aligned}
z\in DS(x,u)(w)\,&\Longleftrightarrow\, w\in D(S^{-1})(u,x)(z),\\ 
z\in D^{*}S(x,u)(w)\,&\Longleftrightarrow\, -w\in D^{*}(S^{-1})(u,x)(-z)
\end{aligned}
\end{align}
together with the elementary sum rules for the summation of $S:\mathcal{R}^{n}\rightrightarrows\mathcal{R}^{m}$ and a smooth mapping $h:\mathcal{R}^{n}\rightarrow\mathcal{R}^{m}$ that are formulated as
\begin{align}\label{eq:graphical-sum}
\begin{aligned}
\mathcal{T}_{\operatorname{gph}(h+S)}(x,u)&=\big\{(w,h'(x)w+z)\,\big|\, (w,z)\in\mathcal{T}_{\operatorname{gph} S}(x,u-h(x))\big\},\\
\mathcal{N}_{\operatorname{gph}(h+S)}(x,u)&=\left\{(w- h'(x)^{T}z,z)\,|\,(w,z)\in\mathcal{N}_{\operatorname{gph}S}(x,u-h(x))\right\}.
\end{aligned}
\end{align}
By the definitions in \eqref{gder} and \eqref{lim-cod}, the latter representations readily imply that, respectively,
\begin{align}\label{eq:coderivative-sum}
\begin{aligned}
D(h+S)(x,u)(z)&=h'(x)z+DS(x,u-h(x))(z),\\
D^{*}(h+S)(x,u)(z)&=h'(x)^{T}z+D^{*}S(x,u-h(x))(z).
\end{aligned}
\end{align}

Next we revisit the concepts of (strong)  variational convexity of functions and (strong) variational sufficiency for local optimality introduced in \cite{Rockafellar2019,Rockafellar2019a}.

\begin{definition}\label{def:varconv}
Let $r:\mathcal{R}^{m}\rightarrow \overline{\mathcal{R}}$ be an l.s.c.\ function with $\bar{x}\in\operatorname{dom}r$ and $\bar{u}\in\partial r(\bar{x})$. We say that $r$ is {\sc variationally convex} at $\bar{x}$ for $\bar{u}$ if for some convex neighborhood $X\times Y$ of $(\bar{x},\bar{u})$, there exist an l.s.c.\ convex function $\hat{r}\leq r$ on $X$ and a constant $\ve>0$ such that 
\begin{align*}
[X_{\varepsilon}\times Y]\cap\operatorname{gph}\partial r=[X\times Y]\cap\operatorname{gph}\partial\hat{r}\ \mbox{and}\ r(x)=\hat{r}(x)\ \mbox{at the common elements}\ (x,u),
\end{align*}
where $X_{\varepsilon}:=\left\{x\in X\,\left|\,r(x)<r(\bar{x})+\varepsilon\right.\right\}$.
The function $r$ is called {\sc variationally strongly convex} at $\bar{x}$ for $\bar{u}$ with modulus $\sigma>0$ if $\hat{r}$ above is strongly convex on $X$ with that modulus.
\end{definition}

To define the notion of variational sufficiency and its strong counterpart for (COP), it is convenient to incorporate  a perturbation parameter therein and recast (COP) in the form:
\begin{align*}
(P)\qquad\min\ f_{0}(x)+g(F(x)+s)\quad\mbox{ s.t. }\;s=0.
\end{align*}

\begin{definition}\label{varsuf} We say that the {\sc variational sufficiency} for local optimality in $(P)$ holds at $\bar{x}$ relative to $\bar{u}$ satisfying the KKT system \eqref{eq:kkt-problem-P} if there exists $\sigma>0$ such that the function $G_{\sigma}$ is variationally convex with respect to the pair $((\bar{x},0),(0,\bar{u}))\in\operatorname{gph\,\partial G_{\sigma}}$, where
\begin{align}\label{G}
G_{\sigma}(x,s):=f_{0}(x)+g(F(x)+s)+\frac{\sigma}{2}\|s\|^{2}.
\end{align}
{\sc Strong variational sufficiency} for local optimality in $(P)$ holds at $\bar{x}$ relative to $\bar{u}$ with modulus $\kappa>0$ if $G_{\sigma}$ in \eqref{G} is variationally strongly convex at $\bar{x}$ for $\bar{u}$ with that modulus.
\end{definition}

The general standing assumptions on the data of (COP) were given in Section~\ref{intro}. However, they are not enough to guarantee the desired equivalence. Now we formulate and discuss some additional assumptions on the extended-real-valued function $g$ in (COP), which will be used in the subsequent sections to establish constructive second-order characterizations of variational sufficiency for local optimality in (COP). Although these assumptions will be mainly applied below to the function $g$ in (COP), we'll formulate them for an arbitrary extended-real-valued l.s.c.\ convex function to make it convenient to study some properties and interrelations important for their own sake, independently of applications to (COP). We start with the following one.

\begin{assumption}\label{assump-1}
Let $r:\mathcal{R}^{m}\rightarrow \overline{\mathcal{R}}$ be an l.s.c.\ convex function with  $\bar{x}\in\dom r$ and $\bar{u}\in\partial r(\bar{x})$. Suppose that this function is parabolically regular at $\bar{x}$ for $\bar{u}$ and parabolically epi-differentiable at $\bar{x}$ for all $w\in\{d\,|\,dr(\bar{x})(d)=\langle\bar{u},d\rangle\}$ with the subderivative $dr(\bar{x})$ taken from \eqref{subder}.
\end{assumption}

It has been well recognized in variational analysis that Assumption~\ref{assump-1} is not restrictive and is satisfied for broad classes of extended-real-valued functions that overwhelmingly appear in composite optimization; see \cite{Mohammadi2021,Mohammadi2022,Mohammadi2020} for a collection of major results in this direction. In particular, Assumption~\ref{assump-1} holds if $r$ is ${\cal C}^2$-{\em cone reducible} in the sense of the definition below.

\begin{definition}
Given two closed convex sets $C\subset\mathcal{R}^{m}$ and $K\subset\mathcal{R}^{l}$ with $l\in\mathbb N\cup\{\infty\}$, it is said that $C$ is ${\cal C}^{l}$-reducible to $K$ at a point $x\in C$ if there exist a neighborhood $U$ of $x$ and an $l$-times continuously differentiable mapping $\Xi:U\rightarrow\mathcal{R}^{l}$ 
such that $\Xi'(x):\mathcal{R}^{m}\rightarrow\mathcal{R}^{l}$ is surjective and $C\cap U=\{x\in U\, |\, \Xi(x)\in K\}$. The reduction is pointed if the tangent cone $\mathcal{T}_{K}(\Xi(x))$ is pointed with $\Xi(x)=0$. If in addition $K-\Xi(x)$ is a pointed closed convex cone, then $C$ is ${\cal C}^{l}$-cone reducible at $x$. Finally, we say that a closed proper convex function $r:\mathcal{R}^{m}\rightarrow\overline{\mathcal R}$ is ${\cal C}^{2}$-{\sc cone reducible} at $x$ if the set $\operatorname{epi}r$ is ${\cal C}^{2}$-cone reducible at $(x,r(x))$. Moreover, it is said that $r$ is ${\cal C}^{2}$-cone reducible if $r$ is ${\cal C}^{2}$-cone reducible at every point $x\in\operatorname{dom}r$. 
\end{definition}

Note that the class of ${\cal C}^2$-cone reducible functions includes, in particular, the $\ell_{p}$ norm functions (where $p=1,2,\infty$), the nuclear norm function, the matrix spectral norm function,  the indicator functions of the nonnegative orthant cone, the second-order cone, and the positive-semidefinite cone. Therefore, all of these functions satisfy Assumption~\ref{assump-1}.\vspace*{0.05in} 

Prior to the formulation of the next assumption, recall that 
the range of a multifunction $S\colon\mathcal{R}^n\tto\mathcal{R}^m$ is the set $\operatorname{rge}S:=\{u\in\mathcal{R}^{m}\,|\,\exists\,x\in\mathcal{R}^{n}\, \mbox{ with }\, u\in S(x)\}$.

\begin{assumption}\label{assump-2}
Let $r:\mathcal{R}^{m}\rightarrow \overline{\mathcal{R}}$ be an l.s.c.\ proper convex function with $\bar{x}\in\operatorname{dom}r$ and $\bar{u}\in\partial r(\bar{x})$. Suppose that there exists $\overline{W}\in\Bar\grad\operatorname{Prox}_{r}(\bar{x}+\bar{u})$ from the limiting Jacobian \eqref{Bsub} of the proximal mapping \eqref{prox} ensuring the range representation
\begin{align*}
\bigcup\limits_{W\in J\operatorname{Prox}_{r}(\bar{x}+\bar{u})}\operatorname{rge}W=\operatorname{rge}\overline{W},
\end{align*}
and for any $y\in\bigcup\limits_{W\in J\operatorname{Prox}_{r}(\bar{x}+\bar{u})}\operatorname{rge}W$ we  have $y=\overline{W}\bar{z}$, where $\bar{z}$ satisfies the equality
\begin{align*}
\min\limits_{{z\in\mathcal{R}^{m},y=Wz}\atop{W\in J\operatorname{Prox}_{r}(\bar{x}+\bar{u})}}\langle y,z-y\rangle=\langle y,\bar{z}-y\rangle.
\end{align*}
\end{assumption}

Assumption~\ref{assump-2} is more technical compared to Assumption~\ref{assump-1} while allowing us to describe a general class of COPs for which strong variational sufficiency can be {\em fully characterized} by pointbased second-order conditions. The fulfillment of this assumption can be readily verified if the matrix set $J\operatorname{Prox}_{r}(\bar{x}+\bar{u})$ is available. In Appendix~\ref{appendix-A}, we show that Assumption~\ref{assump-2} is satisfied for the highly important nonpolyhedral cases of the indicator function associated with positive-semidefinite cones and of the nuclear norm function. \vspace*{-0.1in}

\section{Limiting Coderivative of Proximal Mappings}\label{sec:prox}\vspace*{-0.05in}

In this section, we evaluate and estimate the limiting coderivative \eqref{lim-cod} of the proximal mapping \eqref{prox}. The results obtained below are of their own interest while being important to establish the main second-order characterizations of strong variational sufficiency in (COP).\vspace*{0.05in}

Recall first the following useful rule proved in  \cite[Theorem~1.17]{Mordukhovich2006} in general Banach spaces. 

\begin{lemma}\label{lem:normal-cone-trans}
Let $S:\mathcal{R}^{m}\rightarrow\mathcal{R}^{n}$ be a mapping strictly differentiable at $\bar{x}$ and such that the derivative operator $S'(\bar{x})$ is surjective, and let $\Theta\subset\mathcal{R}^{n}$ with $\bar{y}=S(\bar{x})\in\Theta$. Then we have 
\begin{align*}
\mathcal{N}_{S^{-1}(\Theta)}(\bar{x})=S'(\bar{x})^{T}\mathcal{N}_{\Theta}(\bar{y}).
\end{align*}
\end{lemma}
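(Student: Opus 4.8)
The plan is to establish the two inclusions of the asserted equality separately, using two consequences of the surjectivity of $S'(\bar x)$. The first is the purely linear-algebraic identity
\[
\big[A^{-1}C\big]^{\circ}=A^{T}C^{\circ}\qquad\text{whenever $A$ is onto and $C$ is a cone},
\]
whose ``$\supseteq$'' part is immediate from $\langle A^{T}\eta,w\rangle=\langle\eta,Aw\rangle\le 0$, and whose ``$\subseteq$'' part follows because $\ker A\subset A^{-1}C$ (as $0\in C$) forces any $y\in[A^{-1}C]^{\circ}$ into $(\ker A)^{\perp}=\operatorname{rge}A^{T}$, so $y=A^{T}\eta$ for a unique $\eta$, and surjectivity of $A$ then yields $\langle\eta,c\rangle=\langle y,w\rangle\le 0$ for the preimage $w$ of any $c\in C$, i.e.\ $\eta\in C^{\circ}$. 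The second is that strict differentiability of $S$ at $\bar x$ together with surjectivity of $S'(\bar x)$ makes $S$ metrically regular around $\bar x$ (Lyusternik--Graves), which in particular produces, for every $y$ near $\bar y$, a point $x$ near $\bar x$ with $S(x)=y$.

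First I would treat the \emph{regular} normal cones. From plain differentiability of $S$ at a point $x\in S^{-1}(\Theta)$ one gets by a direct chain-rule computation on tangent sequences that $\mathcal{T}_{S^{-1}(\Theta)}(x)\subset S'(x)^{-1}\mathcal{T}_{\Theta}(S(x))$, and metric regularity supplies the reverse inclusion, so the two cones coincide. Polarizing this equality and invoking \eqref{rnc} together with the displayed polarity identity applied to $A=S'(x)$ (onto for $x$ near $\bar x$) and $C=\mathcal{T}_{\Theta}(S(x))$, I obtain $\widehat{\mathcal{N}}_{S^{-1}(\Theta)}(x)=S'(x)^{T}\widehat{\mathcal{N}}_{\Theta}(S(x))$; at $x=\bar x$ only the trivial tangent inclusion is needed for the ``$\supset$'' half.

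Next I would pass to the \emph{limiting} normal cones through \eqref{lnc}. For ``$\subset$'': take $x_{k}\xrightarrow{S^{-1}(\Theta)}\bar x$ and $y_{k}\to y$ with $y_{k}\in\widehat{\mathcal{N}}_{S^{-1}(\Theta)}(x_{k})=S'(x_{k})^{T}\eta_{k}$, $\eta_{k}\in\widehat{\mathcal{N}}_{\Theta}(S(x_{k}))$; since $S'(x_{k})^{T}$ is injective with a uniform lower norm bound $\|S'(x_{k})^{T}\eta\|\ge c\|\eta\|$ near $\bar x$, the $\eta_{k}$ are bounded, and a subsequence converges to some $\eta$ with $y=S'(\bar x)^{T}\eta$ and $\eta\in\mathcal{N}_{\Theta}(\bar y)$ because $S(x_{k})\xrightarrow{\Theta}\bar y$. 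For ``$\supset$'': given $\eta\in\mathcal{N}_{\Theta}(\bar y)$, choose $y_{k}\xrightarrow{\Theta}\bar y$, $\eta_{k}\to\eta$, $\eta_{k}\in\widehat{\mathcal{N}}_{\Theta}(y_{k})$; metric regularity yields $x_{k}\to\bar x$ with $S(x_{k})=y_{k}$, hence $x_{k}\in S^{-1}(\Theta)$, and the regular-normal identity at $x_{k}$ gives $S'(x_{k})^{T}\eta_{k}\in\widehat{\mathcal{N}}_{S^{-1}(\Theta)}(x_{k})$ with $S'(x_{k})^{T}\eta_{k}\to S'(\bar x)^{T}\eta$, so $S'(\bar x)^{T}\eta\in\mathcal{N}_{S^{-1}(\Theta)}(\bar x)$.

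The main obstacle is visible in the previous step: applying the regular-normal identity at nearby points $x_{k}$ requires $S$ to be differentiable, with continuous derivative, in a neighborhood of $\bar x$, whereas the hypothesis only furnishes strict differentiability \emph{at} $\bar x$. When $S$ is $C^{1}$ near $\bar x$ (the situation in all applications of this lemma in the paper, where $S$ is an affine shift of $F$) this is automatic and the argument above is complete. In full generality one removes the difficulty by the standard Lyusternik--Graves reduction: splitting $\mathcal{R}^{m}=\ker S'(\bar x)\oplus E$ and using a Lipschitzian local right inverse of $S$, one reparametrizes $S$ near $\bar x$ so that the role of $S$ is played by the \emph{linear} surjection $S'(\bar x)$, for which $S'(x)\equiv S'(\bar x)$ and all continuity and boundedness issues disappear; the bookkeeping of how limiting normals behave under this reduction is exactly what is carried out in \cite{Mordukhovich2006}.
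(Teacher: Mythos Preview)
The paper does not supply its own proof of this lemma; it simply records the result as \cite[Theorem~1.17]{Mordukhovich2006} and moves on. So there is no ``paper's proof'' to compare against---your proposal is strictly more informative than what appears in the text.

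Your argument is correct and complete when $S$ is $C^{1}$ near $\bar x$: the polarity identity $[A^{-1}C]^{\circ}=A^{T}C^{\circ}$ is verified, the tangent-cone equality $\mathcal{T}_{S^{-1}(\Theta)}(x)=S'(x)^{-1}\mathcal{T}_{\Theta}(S(x))$ follows from differentiability at $x$ together with metric regularity, and the passage to limiting normals via \eqref{lnc} goes through because $S'(x_k)^{T}$ is injective with a uniform lower norm bound near $\bar x$. Since the only application in the paper (Proposition~\ref{prop:relation-coderivative-element}) takes $S$ to be a fixed linear isomorphism, your $C^{1}$ argument already covers every use made of the lemma here.

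You are also right that the step ``apply the regular-normal identity at $x_k$'' is the genuine obstacle under mere strict differentiability, because $S'(x_k)$ need not exist. Your reference to the Lyusternik--Graves reduction is reasonable but somewhat opaque; a cleaner way to state the fix is this: strict differentiability lets you replace $S'(x_k)$ by the \emph{single} operator $S'(\bar x)$ throughout, since $S(x')-S(x)=S'(\bar x)(x'-x)+o(\|x'-x\|)$ holds \emph{uniformly} for $x,x'\to\bar x$. The residual $o$-term then forces one to work with $\varepsilon$-normals at $x_k$ rather than exact regular normals, letting $\varepsilon\downarrow 0$ along with $x_k\to\bar x$; this is precisely the mechanism in \cite[Theorem~1.17]{Mordukhovich2006}. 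Either way, your closing citation lands on the same source the paper invokes.
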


Now we obtain a relationship between the limiting coderivative and generalized Jacobian of the proximal mapping associated with an extended-real-valued function. 

\begin{proposition}\label{prop:relation-coderivative-element}
Let $r:\mathcal{R}^{m}\rightarrow\Bar{\mathcal R}$ be an l.s.c.\  convex function with  $\bar{x}\in\operatorname{dom}r$, $\bar{u}\in\partial r(\bar{x})$. Given $\sigma>0$, for any $d\in\mathcal{R}^{m}$ and $v\in D^{*}\operatorname{Prox}_{\sigma r^{*}}(\bar{u}+\sigma\bar{x})(d)$, there is $U\in J\operatorname{Prox}_{r}(\bar{x}+\bar{u})$ with
\begin{align*}
v=(I+(\sigma-1)U)^{-1}(I-U)d.
\end{align*}
\end{proposition}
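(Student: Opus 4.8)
The plan is to relate the proximal mapping of $r^*$ to that of $r$ via the Moreau identity, then push the coderivative computation through the resulting affine change of variables and the Rademacher-style differentiation built into the generalized Jacobian. First I would record the Moreau identity in the scaled form adapted to our parameters: for the function $r^*$ and scaling $\sigma^{-1}$ one has
\begin{align*}
\operatorname{Prox}_{\sigma r^{*}}(y)+\sigma\operatorname{Prox}_{\sigma^{-1}r}(\sigma^{-1}y)=y,
\end{align*}
and, applied instead to $r$ with scaling $1$, the plain identity $\operatorname{Prox}_{r}(z)+\operatorname{Prox}_{r^{*}}(z)=z$. The point $\bar u+\sigma\bar x$ is chosen exactly so that, after substituting $y=\bar u+\sigma\bar x$, the inner argument $\sigma^{-1}y=\sigma^{-1}\bar u+\bar x$ lands where we want it; combining the two identities lets us express $\operatorname{Prox}_{\sigma r^{*}}$ locally around $\bar u+\sigma\bar x$ as an affine function composed with $\operatorname{Prox}_{r}$ evaluated near $\bar x+\bar u$. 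Concretely, I expect to reach a pointwise relation of the form $\operatorname{Prox}_{\sigma r^{*}}(\bar u+\sigma\bar x+\cdot)=\operatorname{affine}\bigl(\operatorname{Prox}_{r}(\bar x+\bar u+\cdot)\bigr)$, which is what the stated formula $v=(I+(\sigma-1)U)^{-1}(I-U)d$ must ultimately encode.

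Next I would transfer this to the level of Clarke generalized Jacobians. Since $\operatorname{Prox}_{r}$ is Lipschitz (globally, being the proximal map of an l.s.c.\ convex function, by Rockafellar's theorem quoted in the preliminaries), the chain/affine rules for the Clarke Jacobian give $J\operatorname{Prox}_{\sigma r^{*}}(\bar u+\sigma\bar x)$ in terms of $J\operatorname{Prox}_{r}(\bar x+\bar u)$: each element $\widetilde W$ of the former is of the form $\widetilde W=(I+(\sigma-1)U)^{-1}(I-U)$ for some $U\in J\operatorname{Prox}_{r}(\bar x+\bar u)$, the matrix $I+(\sigma-1)U$ being invertible because $U$ is a convex combination of limits of symmetric projection-type derivatives with spectrum in $[0,1]$, so the eigenvalues of $I+(\sigma-1)U$ lie in the interval with endpoints $1$ and $\sigma$, hence are positive. (I would cite the standard fact that derivatives of $\operatorname{Prox}_{r}$ for convex $r$ are symmetric positive semidefinite with norm $\le 1$, e.g.\ via the fact that $\operatorname{Prox}_{r}=\nabla e_{r}$ up to the standard identity, so its Jacobians are Hessians of a convex $C^{1}$ function.) Then I invoke \eqref{cod-cl}: for a Lipschitz single-valued map, $\operatorname{conv}D^{*}\operatorname{Prox}_{\sigma r^{*}}(\bar u+\sigma\bar x)(d)=\{\widetilde W^{T}d\mid \widetilde W\in J\operatorname{Prox}_{\sigma r^{*}}(\bar u+\sigma\bar x)\}$. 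Since any $v\in D^{*}\operatorname{Prox}_{\sigma r^{*}}(\bar u+\sigma\bar x)(d)$ lies in the convex hull, and — using symmetry of the relevant matrices, so $\widetilde W^{T}=\widetilde W$ after noting $(I+(\sigma-1)U)^{-1}(I-U)$ is symmetric when $U$ is — it is a convex combination $v=\sum_i \lambda_i \widetilde W_i d$ with $\widetilde W_i=(I+(\sigma-1)U_i)^{-1}(I-U_i)$; I would then argue that a single representative $U$ in the convex hull $J\operatorname{Prox}_{r}(\bar x+\bar u)$ can be chosen to realize $v$, exploiting that the map $U\mapsto (I+(\sigma-1)U)^{-1}(I-U)$ need not be affine but its values' action on the fixed vector $d$ can be matched by convexity considerations on the Jacobian set — or, more robustly, by going back to the limiting (non-convexified) coderivative via the Moreau-identity substitution before convexifying, so that limiting coderivative elements of $\operatorname{Prox}_{\sigma r^{*}}$ correspond directly to $\bar\nabla$-elements of $\operatorname{Prox}_{r}$, and then taking the convex hull only at the very end.

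I expect the main obstacle to be precisely this last point: matching an element of the \emph{limiting} coderivative of $\operatorname{Prox}_{\sigma r^{*}}$ with a \emph{single} $U$ in the \emph{Clarke} generalized Jacobian $J\operatorname{Prox}_{r}$, rather than with a convex combination of limiting-Jacobian elements. The clean way around it is to carry the affine Moreau substitution at the level of the limiting Jacobians $\bar\nabla$ (where elements are genuine limits $S'(x^k)\to V$, so the affine chain rule produces a single matrix, not a combination), deduce $v\in \bar\nabla\operatorname{Prox}_{\sigma r^{*}}(\bar u+\sigma\bar x)^{T}d$ maps to $V=(I+(\sigma-1)U_0)^{-1}(I-U_0)$ with $U_0\in\bar\nabla\operatorname{Prox}_{r}(\bar x+\bar u)\subset J\operatorname{Prox}_{r}(\bar x+\bar u)$, and only then note that a general limiting-coderivative element sits in the convex hull, so a convex combination of such $U_0$'s — hence an element of $J\operatorname{Prox}_{r}(\bar x+\bar u)$ by definition of the Clarke Jacobian as a convex hull — does the job. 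The secondary technical checks are the invertibility of $I+(\sigma-1)U$ (spectral argument above) and symmetry of the $\operatorname{Prox}$-Jacobians; both are standard but should be stated explicitly.
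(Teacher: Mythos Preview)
Your approach has a genuine gap at its starting point. The Moreau identity you quote relates $\operatorname{Prox}_{\sigma r^{*}}$ to $\operatorname{Prox}_{\sigma^{-1}r}$, not to $\operatorname{Prox}_{r}$, and for $\sigma\neq 1$ these are different maps. There is no pointwise formula of the form $\operatorname{Prox}_{\sigma r^{*}}(\bar u+\sigma\bar x+\cdot)=\text{affine}\bigl(\operatorname{Prox}_{r}(\bar x+\bar u+\cdot)\bigr)$: the link between $\operatorname{Prox}_{\sigma^{-1}r}$ and $\operatorname{Prox}_{r}$ is only the implicit relation $w=\operatorname{Prox}_{r}(\sigma z+(1-\sigma)w)$, which after implicit differentiation does yield the matrix $(I+(\sigma-1)U)^{-1}(I-U)$, but this is not an affine chain-rule situation. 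Consequently your Clarke-Jacobian transfer and the subsequent convexification both break down: the map $U\mapsto(I+(\sigma-1)U)^{-1}(I-U)$ is nonlinear in $U$, so writing $v=\sum_i\lambda_i\widetilde W_i d$ with $\widetilde W_i=(I+(\sigma-1)U_i)^{-1}(I-U_i)$ and then setting $U:=\sum_i\lambda_iU_i\in J\operatorname{Prox}_{r}(\bar x+\bar u)$ does \emph{not} give $v=(I+(\sigma-1)U)^{-1}(I-U)d$. Your ``robust'' alternative via $\bar\nabla$ does not rescue this, because elements of the limiting coderivative $D^{*}\operatorname{Prox}_{\sigma r^{*}}$ are not in general of the form $V^{T}d$ for a single $V\in\bar\nabla\operatorname{Prox}_{\sigma r^{*}}$; the identity \eqref{cod-cl} only holds after convexification on both sides.

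The paper avoids both obstacles by never touching the proximal maps pointwise. It works entirely at the coderivative level on the graphs: starting from $v\in D^{*}\operatorname{Prox}_{\sigma r^{*}}(\bar u+\sigma\bar x)(d)$, it uses $\operatorname{Prox}_{\sigma r^{*}}=(I+\sigma\partial r^{*})^{-1}$, the inverse and sum rules \eqref{eq:inverse-graphical-derivative}--\eqref{eq:coderivative-sum}, the scaling $\operatorname{gph}\partial r^{*}=\begin{pmatrix}I&0\\0&\sigma^{-1}I\end{pmatrix}\operatorname{gph}\sigma\partial r^{*}$ via Lemma~\ref{lem:normal-cone-trans}, and $\partial r^{*}=(\partial r)^{-1}$ to arrive at
\[
v-d\in D^{*}\operatorname{Prox}_{r}(\bar x+\bar u)\bigl(-d-(\sigma-1)v\bigr).
\]
The crucial feature is that the input direction $-d-(\sigma-1)v$ already contains $v$, so that a single application of \eqref{cod-cl} produces a matrix $U\in J\operatorname{Prox}_{r}(\bar x+\bar u)$ with $v-d=U(-d-(\sigma-1)v)$, which is \emph{linear} in $U$ and can be solved to give $v=(I+(\sigma-1)U)^{-1}(I-U)d$. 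This linearity is exactly what makes one $U$ suffice and is what your route cannot deliver.
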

\begin{proof}
Deduce from the definition of the limiting coderivative and the relationships in \eqref{eq:inverse-graphical-derivative} and \eqref{eq:coderivative-sum} that the inclusion $v\in D^{*}\operatorname{Prox}_{\sigma r^{*}}(\bar{u}+\sigma\bar{x})(d)$ is equivalent to 
\begin{align*}
-d\in D^{*}(I+\sigma\partial r^{*})(\bar{u},\bar{u}+\sigma\bar{x})(-v)\Longleftrightarrow -d+v\in D^{*}(\sigma \partial r^{*})(\bar{u},\sigma\bar{x})(-v).
\end{align*}
It is easy to observe the representation
\begin{align*}
\operatorname{gph}\partial r^{*}=\left[\begin{array}{cc}
I&0\\0&\sigma^{-1} I
\end{array}\right]\operatorname{gph}\sigma\partial r^{*}.
\end{align*}
Applying Lemma~\ref{lem:normal-cone-trans}, we get
\begin{align*}
\mathcal{N}_{\operatorname{gph}\partial r^{*}}(\bar{u},\bar{x})=\left[\begin{array}{cc}
I&0\\0&\sigma I
\end{array}\right]\mathcal{N}_{\operatorname{gph}\sigma\partial r^{*}}(\bar{u},\sigma\bar{x}).
\end{align*}
Combining \eqref{eq:inverse-graphical-derivative}, this leads us to the equivalencies
\begin{align*}
-d+v\in D^{*}(\sigma \partial r^{*})(\bar{u},\sigma\bar{x})(-v)&\Longleftrightarrow-d+v\in D^{*}(\partial r^{*})(\bar{u},\bar{x})(-\sigma v)\Longleftrightarrow \sigma v\in D^{*}(\partial r)(\bar{x},\bar{u})(d-v),
\end{align*}
which imply in turn by using \eqref{eq:inverse-graphical-derivative} and \eqref{eq:coderivative-sum} that
\begin{align*}
v-d\in D^{*}\operatorname{Prox}_{r}(\bar{x}+\bar{u})(-d-(\sigma-1)v).
\end{align*}
Finally, it follows from \eqref{cod-cl} that there exists a matrix $U\in J\operatorname{Prox}_{r}(\bar{x}+\bar{u})$ satisfying 
\begin{align*}
v-d=U(-d-(\sigma-1)v).
\end{align*}
This clearly yields the claimed result and completes the proof of the proposition.
\end{proof}

Next we derive the main result of this section providing a novel explicit estimate of the limiting coderivative of the proximal mapping via the limiting Jacobian \eqref{Bsub}.

\begin{theorem}\label{prop-ineq-norm-W} Let $r:\mathcal{R}^{m}\rightarrow \overline{\mathcal{R}}$, $\bar{x}$, and $\bar{u}$ be the same as in 
Proposition~{\rm\ref{prop:relation-coderivative-element}}, and let $\sigma>0$. Suppose in addition  that the function $r$ satisfies Assumption~{\rm\ref{assump-2}} at $\bar{x}$ for $\bar{u}$ with the matrix $\overline{W}\in\Bar\grad\operatorname{Prox}_{r}(\bar{x}+\bar{u})$ therein. Then for any $d\in\mathcal{R}^{m}$ and $z\in D^{*}\operatorname{Prox}_{\sigma r^{*}}(\bar{u}+\sigma\bar{x})(d)$ satisfying $z=(I+(\sigma-1)U)^{-1}(I-U)d$ and $U\in J\operatorname{Prox}_{r}(\bar{x}+\bar{u})$, we have
\begin{align}\label{cod-est}
\|z\|^{2}\geq\|(I-UU^{\dagger})d\|^{2}\geq\|(I-\overline{W}\overline{W}^{\dagger})d\|^{2}.
\end{align} 
Moreover, we also have
\begin{align}\label{sigma-cod-est}
    \|\sigma(I+(\sigma-1)U)^{-1}Ud-UU^{\dagger}d\|\rightarrow0,\;\mbox{as}\;\sigma\rightarrow\infty.
\end{align}
\end{theorem}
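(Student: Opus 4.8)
The plan is to work eigenvalue-by-eigenvalue, exploiting the fact that any $U\in J\operatorname{Prox}_r(\bar x+\bar u)$ is symmetric positive semidefinite with eigenvalues in $[0,1]$ (this is the standard self-adjointness and nonexpansiveness property of proximal-mapping Jacobians for convex $r$, which is implicitly used throughout the section). Diagonalize $U=\sum_i \mu_i p_ip_i^T$ with $\mu_i\in[0,1]$, write $d=\sum_i d_ip_i$, and observe that the matrix $(I+(\sigma-1)U)^{-1}(I-U)$ commutes with $U$, so $z=\sum_i \phi_\sigma(\mu_i)d_i p_i$ where $\phi_\sigma(\mu):=\frac{1-\mu}{1+(\sigma-1)\mu}$. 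Hence $\|z\|^2=\sum_i \phi_\sigma(\mu_i)^2 d_i^2$.

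For the first inequality in \eqref{cod-est}, note $I-UU^\dagger$ is the orthogonal projection onto $\ker U$, i.e. onto $\operatorname{span}\{p_i:\mu_i=0\}$, so $\|(I-UU^\dagger)d\|^2=\sum_{i:\mu_i=0} d_i^2$. Since $\phi_\sigma(0)=1$ and $\phi_\sigma(\mu)\ge 0$ for $\mu\in[0,1]$ and $\sigma>0$, we get $\|z\|^2=\sum_i\phi_\sigma(\mu_i)^2 d_i^2\ge \sum_{i:\mu_i=0}\phi_\sigma(0)^2 d_i^2=\|(I-UU^\dagger)d\|^2$, which is the first bound. For the second inequality, $I-\overline W\,\overline W^\dagger$ is the projection onto $\ker\overline W=(\operatorname{rge}\overline W)^\perp$; by Assumption~\ref{assump-2} one has $\operatorname{rge} U\subset\bigcup_{W\in J\operatorname{Prox}_r}\operatorname{rge}W=\operatorname{rge}\overline W$, hence $\ker\overline W\subset\ker U$ (taking orthogonal complements of $\operatorname{rge}\overline W\supset\operatorname{rge}U$, using symmetry so $\ker=\operatorname{rge}^\perp$), and therefore the projection onto $\ker\overline W$ is dominated by the projection onto $\ker U$: $\|(I-\overline W\,\overline W^\dagger)d\|\le\|(I-UU^\dagger)d\|$. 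Chaining the two gives \eqref{cod-est}. I would also double-check the degenerate direction: if $\mu_i=1$ for some $i$ then $\phi_\sigma(1)=0$ and that coordinate simply drops out of $\|z\|^2$, which is consistent (those directions lie in $\ker(I-U)$ and contribute nothing), and they are not in $\ker U$, so no term is lost on the right-hand side either.

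For the convergence claim \eqref{sigma-cod-est}, again decompose in the eigenbasis of $U$: $\sigma(I+(\sigma-1)U)^{-1}U-UU^\dagger$ acts on $p_i$ as multiplication by $\psi_\sigma(\mu_i):=\frac{\sigma\mu_i}{1+(\sigma-1)\mu_i}-\mathbf 1_{\{\mu_i\ne 0\}}$. If $\mu_i=0$ then $\psi_\sigma(0)=0$ for all $\sigma$; if $\mu_i>0$ then $\frac{\sigma\mu_i}{1+(\sigma-1)\mu_i}=\frac{\sigma\mu_i}{\sigma\mu_i+(1-\mu_i)}\to 1$ as $\sigma\to\infty$, so $\psi_\sigma(\mu_i)\to 0$. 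Since there are finitely many eigenvalues and each multiplier tends to $0$, $\|\sigma(I+(\sigma-1)U)^{-1}Ud-UU^\dagger d\|^2=\sum_i\psi_\sigma(\mu_i)^2 d_i^2\to 0$, giving \eqref{sigma-cod-est}.

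The main obstacle, and the one place I would be most careful, is justifying that every $U\in J\operatorname{Prox}_r(\bar x+\bar u)$ is symmetric with spectrum in $[0,1]$; while this is classical for convex $r$ (the proximal map is the gradient of the $1$-smooth convex Moreau envelope, so its generalized Jacobian elements are symmetric, and firm nonexpansiveness forces the spectral bound), the paper should cite it explicitly or derive it, since the whole eigen-decomposition argument — commutativity of $(I+(\sigma-1)U)^{-1}(I-U)$ with $U$, positivity of $\phi_\sigma$, and the identification $I-UU^\dagger=\Pi_{\ker U}$ — rests on it. The secondary technical point is the inclusion $\operatorname{rge}U\subset\operatorname{rge}\overline W$: this needs Assumption~\ref{assump-2} together with the (elementary) fact that for symmetric matrices $\operatorname{rge}A=(\ker A)^\perp$, so that $\operatorname{rge}U\subset\operatorname{rge}\overline W$ gives $\ker\overline W\subset\ker U$ and hence $\Pi_{\ker\overline W}\preceq\Pi_{\ker U}$ in the Löwner order, which yields the norm inequality for every $d$.
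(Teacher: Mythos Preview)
Your proposal is correct and follows essentially the same approach as the paper: both diagonalize $U$ (the paper writes it as an SVD $U=R\,\mathrm{diag}(S_l,0)\,R^T$, you as an eigendecomposition $\sum_i\mu_ip_ip_i^T$, which coincide for symmetric PSD matrices), compute $(I+(\sigma-1)U)^{-1}(I-U)$ and $\sigma(I+(\sigma-1)U)^{-1}U-UU^\dagger$ block-diagonally, and read off both inequalities and the limit from the scalar factors $\phi_\sigma(\mu)$ and $\psi_\sigma(\mu)$. The only cosmetic difference is in the second inequality of \eqref{cod-est}: the paper writes $(I-UU^\dagger)d=(I-\overline W\,\overline W^\dagger)d+(\overline W\,\overline W^\dagger d-UU^\dagger d)$, checks orthogonality of the two summands (using $\operatorname{rge}U\subset\operatorname{rge}\overline W$ and $(I-\overline W\,\overline W^\dagger)d\in\ker\overline W$), and applies Pythagoras, whereas you phrase the same fact as the L\"owner ordering $\Pi_{\ker\overline W}\preceq\Pi_{\ker U}$ of nested orthogonal projections---these are equivalent formulations of the same argument.
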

\begin{proof}
Picking any $d\in\mathcal{R}^{m}$ and $z\in D^{*}\operatorname{Prox}_{\sigma r^{*}}(\bar{u}+\sigma\bar{x})(d)$, deduce from Proposition~\ref{prop:relation-coderivative-element} the existence of a matrix $U\in J\operatorname{Prox}_{r}(\bar{x}+\bar{u})$ with rank $0\le l\le m$ such that $z=(I+(\sigma-1)U)^{-1}(I-U)d$. Since $U$ is a positive-semidefinite matrix, its singular value decomposition can be expressed via an orthogonal matrix $R$ and $S_{l}=\operatorname{Diag}\{\lambda_{1},\ldots,\lambda_{l}\}$ with $\lambda_{i}>0$ $(i=1,\ldots,l)$ as
\begin{align*}
U=R\left[\begin{array}{cc}
S_{l}&0\\0&0
\end{array}\right]R^{T}.
\end{align*} 
Consequently, the Moore-Penrose inverse $U^{\dagger}$ is given by
\begin{align*}
U^{\dagger}=R\left[\begin{array}{cc}
S^{-1}_{l}&0\\0&0
\end{array}\right]R^{T}.
\end{align*}
Using the above tells us that
\begin{align*}
\|(I-UU^{\dagger})d\|^{2}&=\left\langle(I-UU^{\dagger})d,(I-UU^{\dagger})d\right\rangle=\left\langle R^{T}d,\left[\begin{array}{cc}
0&0\\0&I_{m-l}
\end{array}\right]R^{T}d\right\rangle,\\
(I+(\sigma-1)U)^{-1}(I-U)&=R\left[\begin{array}{cc}
(I_{l}+(\sigma-1)S_{l})^{-1}&0\\0&I_{m-l}
\end{array}\right]\left[\begin{array}{cc}
I_{l}-S_{l}&0\\0&I_{m-l}
\end{array}\right]R^{T}\\
&=R\left[\begin{array}{cc}
(I_{l}+(\sigma-1)S_{l})^{-1}(I_{l}-S_{l})&0\\0&I_{m-l}
\end{array}\right]R^{T},\\
\sigma(I+(\sigma-1)U)^{-1}U-UU^{\dagger}&=R\left[\begin{array}{cc}
     -(I_{l}+(\sigma-1)S_{l})^{-1}(I_{l}-S_{l})& 0 \\
     0 & 0
\end{array}\right]R^{T},
\end{align*}
which leads us therefore to the relationships
\begin{align}\label{eq:coderivative-norm-ineq}
\begin{aligned}
\langle z,z\rangle&=\left\langle R^{T}d,\left[\begin{array}{cc}
(I_{l}+(\sigma-1)S_{l})^{-1}(I_{l}-S_{l})&0\\0&I_{m-l}
\end{array}\right]^{2}R^{T}d\right\rangle\\
&\geq\left\langle R^{T}d,\left[\begin{array}{cc}
0&0\\0&I_{m-l}
\end{array}\right]R^{T}d\right\rangle=\|(I-UU^{\dagger})d\|^{2},
\end{aligned}
\end{align}
and $\|\sigma(I+(\sigma-1)U)^{-1}Ud-UU^{\dagger}d\|\rightarrow0$ as $\sigma\rightarrow\infty$.
It follows from Assumption~\ref{assump-2} that $UU^{\dagger}d\subset\operatorname{rge}U\subset\operatorname{rge}\overline{W}$. Observing that $\overline{W}\overline{W}^{\dagger}d\in\operatorname{rge}\operatorname{\overline{W}}$ yields $\overline{W}\overline{W}^{\dagger}d-UU^{\dagger}d\in\operatorname{rge}\overline{W}$. Since $(I-\overline{W}\overline{W}^{\dagger})d\in\operatorname{ker}(\overline{W})=\operatorname{ker}(\overline{W}^{T})$, we have
\begin{align*}
\left\langle(I-\overline{W}\overline{W}^{\dagger})d,\overline{W}\overline{W}^{\dagger}d-UU^{\dagger}d\right\rangle=0.
\end{align*}
This allows us to deduce from \eqref{eq:coderivative-norm-ineq} that
\begin{align*}
&\langle z,z\rangle\geq\|(I-UU^{\dagger})d\|^{2}=\|(I-\overline{W}\overline{W}^{\dagger})d+(\overline{W}\overline{W}^{\dagger}d-UU^{\dagger}d)\|^{2}\\
&\qquad\quad=\|(I-\overline{W}\overline{W}^{\dagger})d\|^{2}+\|\overline{W}\overline{W}^{\dagger}d-UU^{\dagger}d\|^{2}\geq\|(I-\overline{W}\overline{W}^{\dagger})d\|^{2},
\end{align*}
which derives \eqref{cod-est} and thus completes the proof of the theorem.
\end{proof}

\section{The Second-Order Variational Function and SSOSC}\label{sec:ssosc}

In this section, we introduce a novel notion of the second-order variational function and formulate in its terms an extended version of the SSOSC for a general class of composite optimization problems of type (COP). 

We begin with defining the {\em Lagrangian function} associated with (COP) by
\begin{align*}
\mathcal{L}(x,u):=\inf_{s\in\mathcal{R}^{m}}\Big\{f_{0}(x)+g(F(x)+s)-\langle u,s\rangle\Big\}=L(x,u)-g^{*}(u),
\end{align*}
where $L(x,u):=f_{0}(x)+\langle F(x),u\rangle$. The corresponding {\em Karush-Kuhn-Tucker} (KKT) conditions for (COP) form the following system:
\begin{align}\label{eq:kkt-problem-P}
\nabla_{x}L(x,u)=0,\quad u\in\partial g(F(x)).
\end{align} 
For a given $\sigma>0$, the {\em augmented Lagrangian function} of (COP) is defined by
\begin{align*}
\mathcal{L}_{\sigma}(x,u):=\sup_{s\in\mathcal{R}^{m}}\Big\{\mathcal{L}(x,s)-\frac{1}{2\sigma}\|s-u\|^{2}\Big\}=f_{0}(x)-e_{\sigma g^{*}}(u+\sigma F(x))+\langle u,F(x)\rangle+\frac{\sigma}{2}\|F(x)\|^{2}.
\end{align*}
The {\em generalized Hessian} of $\mathcal{L}_{\sigma}(x,u)$ with respect to the variable $x$ is given by
\begin{align*}
\Bar{\partial}_{xx}^{2}\mathcal{L}_{\sigma}(x,u):=\nabla_{xx}^{2}L(x,u)+\sigma F'(x)^{T} J\operatorname{Prox}_{\sigma g^{*}}(u+\sigma F(x))F'(x).
\end{align*}
For any $U\in J\operatorname{Prox}_{\sigma g^{*}}(u+\sigma F(x))$, observe that 
\begin{align*}
\nabla_{xx}^{2}L(x,u)+\sigma F'(x)^{T}UF'(x)\in\Bar{\partial}_{xx}^{2}\mathcal{L}_{\sigma}(x,u).
\end{align*}
It follows from the chain rule in \cite[Proposition~2.3.3 and Theorem~2.6.6]{Clarke1990} that 
\begin{align*}
J\nabla_{x}\mathcal{L}_{\sigma}(x,u)(d)\subset\Bar{\partial}_{xx}^{2}\mathcal{L}_{\sigma}(x,u)(d)\;\mbox{ for any }\;d\in\mathcal{R}^{n},
\end{align*}
which yields the positive-definiteness of all the matrices from $J\nabla_{x}\mathcal{L}_{\sigma}(x,u)$ provided that those in $\Bar{\partial}_{xx}^{2}\mathcal{L}_{\sigma}(x,u)$ have this property.

Given further an extended real-valued function $r:\mathcal{R}^{m}\rightarrow \overline{\mathcal{R}}$ with $x\in\operatorname{dom}r$ and $u\in\partial r(x)$, let us introduce the {\em second-order variational function} (SOVF) $\Gamma_{r}(x,u):\mathcal{R}^{m}\rightarrow\overline{\mathcal{R}}$ by
\begin{align}\label{sovf}
\Gamma_{r}(x,u)(v):=\left\{\begin{array}{cl}\min\limits_{{d\in\mathcal{R}^{m},v=Wd}\atop{W\in J\operatorname{Prox}_{r}(x+u)}}\langle v,d-v\rangle & \mbox{if}\ v\in\bigcup\limits_{W\in J\operatorname{Prox}_{r}(x+u)}\operatorname{rge}W,\\
\infty&\mbox{otherwise}.
\end{array}\right.
\end{align}
Using \eqref{sovf} leads us to formulating the SSOSC for (COP).

\begin{definition}\label{def:ssosc} Let $(\bar{x},\bar{u})$ be a solution to the KKT system \eqref{eq:kkt-problem-P}. We say that the {\sc strong second-order sufficient condition} $($SSOSC$)$ holds at $\bar{x}$ for $\bar{u}$ if
\begin{align}\label{eq:SSOSC}
\left\langle \nabla_{xx}^{2}L(\bar{x},\bar{u})d,d\right\rangle+\Gamma_{g}(F(\bar{x}),\bar{u})(F'(\bar{x})d)>0\;\mbox{ for all }\;d\ne 0.
\end{align}
\end{definition}

Recall the {\em nuclear norm function} $\|\cdot\|_{*}$ for a given matrix is the sum of all its singular values. The importance of this function in a variety of applications to physics, engineering, computational mathematics, etc. has been well recognized. The example below provides an explicit calculation of the SSOSC for COPs, where $g$ is the nuclear norm function. 

\begin{example} {\rm 
Let $p\leq q$ be two positive integers, and let $g$ be the nuclear norm function $\|\cdot\|_{*}$. Given $(\ox,\bar u)$ satisfying \eqref{eq:kkt-problem-P}, define $X:=F(\bar{x})$, $U:=\bar{u}$, and $Y:=F'(\bar{x})d$. Letting $A=X+U$, denote its singular value decomposition by $A=R\,[\Sigma\ 0]\,S^{T}$, where $R\in\mathcal{R}^{p\times p}$ and $S\in\mathcal{R}^{q\times q}$ are orthogonal matrices, $\Sigma:=\operatorname{Diag}(\sigma_{1},\ldots,\sigma_{p})$ is the diagonal matrix of singular values of $A$ with $\sigma_{1}\geq\cdots\geq\sigma_{p}\ge 0$, and $\widetilde{Y}=R^{T}YS$. Consider the index sets 
\begin{align*}
\alpha:=\{1,\ldots,p\},\
\alpha_{1}:=\{i\in\alpha\,|\,\sigma_{i}>1\},\ \alpha_{2}:=\{i\in\alpha\,|\,\sigma_{i}=1\},\ \alpha_{3}:=\{i\in\alpha\,
|\,\sigma_{i}<1\}.
\end{align*} 
Using the calculations in Example~\ref{example-nuclear} of
Appendix~\ref{appendix-A}, the SSOSC in \eqref{eq:SSOSC} is expressed as follows:
\begin{align*}
&\left\langle \nabla_{xx}^{2}L(\bar{x},\bar{u})d,d\right\rangle+2\sum_{{i<j,i\in\alpha_{1}}\atop{j\in\alpha_{1}\cup\alpha_{2}}}\left(\frac{\sigma_{i}+\sigma_{j}}{\sigma_{i}-1+\max\{\sigma_{j}-1,0\}}-1\right)\left(\frac{\widetilde{Y}_{ij}-\widetilde{Y}_{ji}}{2}\right)^{2}\\
&\ +2\sum_{{i\in\alpha_{1}}\atop{j\in\alpha_{3}}}\left[\frac{1-\sigma_{j}}{\sigma_{i}-1}\left(\frac{\widetilde{Y}_{ij}+\widetilde{Y}_{ji}}{2}\right)^{2}+\frac{1+\sigma_{j}}{\sigma_{i}-1}\left(\frac{\widetilde{Y}_{ij}-\widetilde{Y}_{ji}}{2}\right)^{2}\right]+\sum_{i\in\alpha_{1}}\frac{1}{\sigma_{i}-1}\sum_{j=p+1}^{q}\widetilde{Y}_{ij}^{2}>0,\\ 
&\qquad\qquad\qquad\qquad\qquad\qquad\qquad\qquad\mbox{for all }\;F'(\bar{x})d\in\bigcup\limits_{W\in J\operatorname{Prox}_{\|\cdot\|_{*}}(X+U)}\operatorname{rge}W,\;d\ne 0.
\end{align*}
The explicit expression for $\bigcup\limits_{W\in J\operatorname{Prox}_{\|\cdot\|_{*}}(X+U)}\operatorname{rge}W$ is given in Example~\ref{example-nuclear} of Appendix~\ref{appendix-A}.}
\end{example}

Under the fulfillment of Assumption~\ref{assump-2}, the second-order variational function \eqref{sovf} can be equivalently expressed  at the KKT point $(x,u)$ as
\begin{align}\label{eq:equivalent-form-Gamma-g}
\Gamma_{g}(F(x),u)(v)=\min_{-d\in D^{*}\partial g(F(x),u)(-v)}\langle v,d\rangle
\end{align}	
(see \cite[Lemma~5.3]{Tangwang2024} for more details). 
The function $\Gamma_{g}(F(x),u)$ is further simplified to 
\begin{align}\label{sovf1}
\Gamma_{g}(F(x),u)(v)=\left\langle v,\left(\overline{W}^{\dagger}-I\right)v\right\rangle:=\Upsilon_{g}(F(x),u)(v).
\end{align}
Combining this with the identity
\begin{align*}
\bigcup\limits_{W\in J\operatorname{Prox}_{g}(F(x)+u)}\operatorname{rge}W=\operatorname{ker}\left(I-\overline{W}\overline{W}^{\dagger}\right),
\end{align*} 
the SSOSC \eqref{eq:SSOSC} can be reformulated as
\begin{align*}
\left\langle \nabla_{xx}^{2}L(x,u)d,d\right\rangle+\Upsilon_{g}(F(x),u)(F'(x)d)>0\;\mbox{ for all }\;F'(x)d\in\operatorname{ker}\left(I-\overline{W}\overline{W}^{\dagger}\right),\,d\ne 0.
\end{align*}

To conclude this section, we present a pivotal result from \cite{Rockafellar2023} that is deeply intertwined with strong variational sufficiency for local optimality. Let us first recall the definitions.

\begin{definition}
A function $q:\mathcal{R}^{m}\rightarrow\overline{\mathcal{R}}$ is a {\sc generalized quadratic form} on $\mathcal{R}^{m}$ if $q(0)=0$ and the subgradient mapping $\partial q$ is generalized linear, i.e., $\operatorname{gph}\partial q$ is a subspace of $\mathcal{R}^{m}\times\mathcal{R}^{m}$. A function $r:\mathcal{R}^{m}\rightarrow \overline{\mathcal{R}}$ is {\sc generalized twice differentiable} at $\bar{x}\in\operatorname{dom}r$ for $\bar{u}\in\partial r(\bar{x})$ if it is twice epi-differentiable at $\bar{x}$ for $\bar{u}$ with the second-order subderivative $d^{2}r(\bar{x},\bar{u})$ being a generalized quadratic form on $\mathcal{R}^{m}$.
\end{definition}

\begin{definition}
Let $r:\mathcal{R}^{m}\rightarrow\overline{R}$ be an l.s.c. proper function with $\bar{x}\in\operatorname{dom}r$ and $\bar{u}\in\partial r(\bar{x})$. The {\sc quadratic bundle} of $r$ at $\bar{x}$ for $\bar{u}$ is defined by
\begin{align*}
\operatorname{quad}r(\bar{x},\bar{u}):=\left[\begin{array}{l}\mbox{the collection of generalized quadratic forms}\ q \mbox{ for which there exist } \\(x^{k},u^{k})
\rightarrow(\bar{x},\bar{u}) \mbox{ with $r$ generalized twice differentiable
at } x^{k} \mbox{ for } u^{k}\\
\mbox{ and such that the generalized quadratic
forms } q^{k}=\frac{1}{2}d^{2}r(x^{k},u^{k})\\ \mbox{ converge epigraphically to } q.   \end{array} \right.
\end{align*}
\end{definition}

Quadratic bundles play an important role in characterizing the variational convexity of functions. The recent papers \cite{Helmut2025,KhanMordukhovich2025} systematically investigate the variational convexity of prox-regular functions using quadratic bundles, particularly in settings where subdifferential continuity is absent.
It is proved in \cite[Theorem~5]{Rockafellar2023} that strong variational sufficiency can be fully characterized by the quadratic bundle.
We formulate this result for further references.

\begin{theorem}\label{thm:quad-bundle}
Let $(\bar{x},\bar{u})$ be a solution to the KKT system \eqref{eq:kkt-problem-P}. The strong variational sufficient condition for the local optimality of $\ox$  in $(P)$ relative to $\bar u$ holds if and only if for every $q\in\operatorname{quad}g(F(\bar{x}),\bar{u})$ we have the strict inequality
\begin{align*}
\left\langle \nabla_{xx}^{2}L(\bar{x},\bar{u})d,d\right\rangle+q(F'(\bar{x})d)>0\;\mbox{ whenever }\;d\in\mathcal{R}^{n}\setminus\{0\}.
\end{align*}
\end{theorem}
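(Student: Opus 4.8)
\textbf{Proof proposal for Theorem~\ref{thm:quad-bundle}.}
The plan is to reduce the strong variational sufficiency of the perturbed problem $(P)$ to a statement about the augmented Lagrangian $G_\sigma$ and then to rewrite the latter entirely in terms of the quadratic bundle of $g$ at $(F(\bar x),\bar u)$. First I would recall, following Rockafellar's characterizations of variational convexity via second-order subderivatives, that variational strong convexity of $G_\sigma$ at $((\bar x,0),(0,\bar u))$ is equivalent to a uniform positive-definiteness of a ``generalized second-order'' object built from $G_\sigma$ along sequences of points at which $G_\sigma$ is generalized twice differentiable. Concretely, one shows that strong variational sufficiency in $(P)$ holds iff there is $\sigma>0$ and $\kappa>0$ such that for every $q$ in the quadratic bundle $\operatorname{quad}G_\sigma((\bar x,0),(0,\bar u))$ one has $q(d,e)\ge\kappa(\|d\|^2+\|e\|^2)$ for all $(d,e)$, or — after minimizing out the $s$-direction which carries the $\tfrac\sigma2\|s\|^2$ term — a strict inequality purely in the $x$-direction. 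This is the ``primal-dual'' reformulation underlying Definition~\ref{varsuf}.

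Next I would compute $\operatorname{quad}G_\sigma$ from $\operatorname{quad}g$. Since $G_\sigma(x,s)=f_0(x)+g(F(x)+s)+\tfrac\sigma2\|s\|^2$ with $f_0$, $F$ of class ${\cal C}^2$ and $\tfrac\sigma2\|s\|^2$ a genuine quadratic, the chain rule for twice epi-differentiability and the stability of epi-convergence under adding smooth quadratics give that every element of $\operatorname{quad}G_\sigma((\bar x,0),(0,\bar u))$ has the form
\begin{align*}
(d,e)\ \longmapsto\ \big\langle \nabla_{xx}^2 L(\bar x,\bar u)d,d\big\rangle + q\big(F'(\bar x)d+e\big)+\sigma\|e\|^2,
\end{align*}
as $q$ ranges over $\operatorname{quad}g(F(\bar x),\bar u)$ — the generalized quadratic form of $g$ ``transported'' through the smooth outer data, with the Lagrangian Hessian $\nabla_{xx}^2 L(\bar x,\bar u)=\nabla^2 f_0(\bar x)+\sum_i \bar u_i\nabla^2 F_i(\bar x)$ appearing because of the second-order term in the chain rule and the stationarity $\nabla_x L(\bar x,\bar u)=0$. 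Positive-definiteness of this form over all $(d,e)$ is, after minimizing over $e$ (which is possible because $q$ is a generalized quadratic form and $\sigma>0$, so the partial minimization over $e$ is exact and the infimum is attained on the subspace $\operatorname{dom}q$, with the orthogonal complement suppressed by the $\sigma\|e\|^2$ penalty), equivalent for $\sigma$ large enough to the strict inequality
$\langle \nabla_{xx}^2 L(\bar x,\bar u)d,d\rangle + q(F'(\bar x)d)>0$ for $d\ne 0$, uniformly over $q\in\operatorname{quad}g(F(\bar x),\bar u)$ — which is exactly the asserted condition since $\operatorname{quad}g$ is a compact family of generalized quadratic forms.

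For the converse direction I would argue that if the displayed strict inequality holds for every $q\in\operatorname{quad}g(F(\bar x),\bar u)$, then by compactness of the quadratic bundle there is a uniform modulus $\kappa>0$, and choosing $\sigma$ large makes the transported form on $\mathcal R^n\times\mathcal R^m$ uniformly positive-definite; reversing the chain-rule computation and invoking the second-order characterization of variational strong convexity then yields strong variational sufficiency of $(P)$ at $\bar x$ relative to $\bar u$.

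\textbf{Main obstacle.} The delicate point is the exact partial minimization over the perturbation direction $e$ and the passage from ``positive-definite in $(d,e)$ for some $\sigma$'' to ``positive-definite in $d$ alone for all large $\sigma$'': one must control the generalized quadratic form $q$ on the subspace $\operatorname{dom}q$ versus its complement and show that the $\sigma\|e\|^2$ penalty dominates uniformly over the whole (compact) bundle, so that no element of $\operatorname{quad}g$ produces a bad direction as $\sigma\to\infty$. Equivalently, one has to make sure the quadratic-bundle computation commutes with epi-convergence and with the smooth chain rule simultaneously — i.e., that $\operatorname{quad}G_\sigma$ really is the transported family and not something larger — which is where the ${\cal C}^2$-smoothness of $f_0,F$ and the attainment in the definition of twice epi-differentiability are used essentially. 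This is precisely the part carried out in detail in \cite{Rockafellar2023}, and I would follow that argument.
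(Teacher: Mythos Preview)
The paper does not give its own proof of Theorem~\ref{thm:quad-bundle}: it is stated purely as a citation (``It is proved in \cite[Theorem~5]{Rockafellar2023}\ldots We formulate this result for further references'') and is used as a black box in the proof of Theorem~\ref{thm:main-thm-SSOSC-strong-variational-sufficient-cond}. There is therefore no in-paper argument to compare your proposal against.

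Your sketch is a reasonable outline of how Rockafellar's proof proceeds, and you yourself defer the delicate step --- commuting the quadratic-bundle construction with the ${\cal C}^2$ chain rule and controlling the partial minimization over $e$ uniformly in $\sigma$ --- to \cite{Rockafellar2023}. In that sense your proposal and the paper are doing the same thing: invoking \cite[Theorem~5]{Rockafellar2023} rather than proving it. One caution on your sketch: the claim that $\operatorname{quad}g(F(\bar x),\bar u)$ is a ``compact family'' needs justification (closedness under epi-convergence is clear from the definition, but equi-boundedness of the generalized quadratic forms is not automatic), and the assertion that $\operatorname{quad}G_\sigma$ equals \emph{exactly} the transported family --- rather than merely containing it --- is the genuinely nontrivial ingredient and would need the full argument from \cite{Rockafellar2023}, not just the chain rule for second-order subderivatives.
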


\section{Characterizations of Strong Variational Sufficiency}\label{sec:char}

In this section, we establish pointbased second-order characterizations of strong variational sufficiency for local optimality in problem $(P)$ associated with a general class of COPs under Assumptions~\ref{assump-1} and \ref{assump-2}. Crucial building blocks of our analysis are given in the following collections of lemmas, which are of their own interest.

\begin{lemma}\label{lem:postive-homogeneous-property}
Let $l:\mathcal{R}^{m}\rightarrow\mathcal{R}$ be an l.s.c.\ function that is  positively homogeneous of degree two. Given a linear operator $M:\mathcal{R}^{m}\rightarrow\mathcal{R}^{n}$, assume that there is $\bar{\kappa}>0$ such that for any $d$ satisfying $Md=0$ we have $l(d)\geq\bar{\kappa}\|d\|^{2}$. Then there exist numbers $\kappa\in(0,\bar{\kappa}]$ and $c>0$ ensuring that
\begin{align*}
l(d)+c\|Md\|^{2}\geq\kappa\|d\|^{2}\;\mbox{ whenever }\;d\in\mathcal{R}^{m}.
\end{align*}
\end{lemma}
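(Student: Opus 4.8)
The plan is to argue by contradiction, exploiting the degree-two positive homogeneity of $l$ to reduce everything to the unit sphere. Suppose the conclusion fails. Then for the fixed candidate $\bar\kappa$ (or rather any $\kappa\le\bar\kappa$), and for every choice of $c>0$, the inequality is violated somewhere; in particular, taking $c=k$ and $\kappa=\bar\kappa/k$ — or more simply fixing $\kappa\in(0,\bar\kappa]$ small and letting $c=k\to\infty$ — there exist points $d_k\ne 0$ with $l(d_k)+k\|Md_k\|^2<\kappa\|d_k\|^2$. By homogeneity of degree two we may rescale and assume $\|d_k\|=1$, so that $l(d_k)+k\|Md_k\|^2<\kappa$ for all $k$.

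Next I would extract a convergent subsequence $d_k\to d$ with $\|d\|=1$ (compactness of the unit sphere). The key observation is that $\|Md_k\|^2<(\kappa-l(d_k))/k$, and since $l$ is l.s.c.\ and $d_k$ ranges over a compact set, $l$ is bounded below there, so the right-hand side tends to $0$; by continuity of the linear operator $M$ this forces $Md=0$. Now apply the hypothesis to $d$: since $Md=0$ and $d\ne 0$, we get $l(d)\ge\bar\kappa\|d\|^2=\bar\kappa\ge\kappa$. On the other hand, lower semicontinuity of $l$ gives $l(d)\le\liminf_k l(d_k)\le\liminf_k\big(\kappa-k\|Md_k\|^2\big)\le\kappa-0=\kappa$ (dropping the nonnegative term $k\|Md_k\|^2$ already yields $l(d_k)<\kappa$, so $\liminf l(d_k)\le\kappa$). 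Combining, $\kappa\le l(d)\le\kappa$ is not yet a contradiction, so I would instead choose $\kappa$ strictly smaller than $\bar\kappa$ from the outset — say $\kappa:=\bar\kappa/2\in(0,\bar\kappa]$ — and then $l(d)\ge\bar\kappa>\kappa\ge\liminf_k l(d_k)\ge l(d)$, which is the desired contradiction. Hence for this fixed $\kappa$ there exists a finite $c>0$ making the inequality hold on the unit sphere, and by degree-two homogeneity it holds on all of $\mathcal{R}^m$.

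The main obstacle to watch is the interplay between the two free parameters $\kappa$ and $c$: one must be careful that the contradiction argument is run with $\kappa$ fixed and $c$ driven to infinity, and that the strict gap between $\kappa$ and $\bar\kappa$ is what closes the argument — taking $\kappa=\bar\kappa$ would only give equality in the limit. A secondary technical point is that $l$ need not be continuous, only l.s.c., so one cannot pass $l(d_k)\to l(d)$; only the inequality $\liminf_k l(d_k)\ge l(d)$ is available, but this is precisely the direction needed since the contradiction requires an \emph{upper} bound on $l(d)$ coming from the violating sequence and a \emph{lower} bound coming from the hypothesis. Finally, one should note that the case $\bar\kappa$ arbitrarily large is harmless: we only claim existence of \emph{some} admissible $\kappa\le\bar\kappa$ and \emph{some} $c$, so fixing $\kappa=\bar\kappa/2$ at the start costs nothing.
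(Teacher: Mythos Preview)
Your contradiction argument is correct. Fixing $\kappa=\bar\kappa/2$, normalizing to the unit sphere by degree-two homogeneity, extracting a convergent subsequence, and using lower semicontinuity of $l$ together with boundedness below on the compact sphere to force $Md=0$ and then derive $\bar\kappa\le l(d)\le\bar\kappa/2$ all works as you describe; your explicit remark that $\kappa$ must be taken strictly below $\bar\kappa$ is exactly the point that closes the argument.

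The paper's proof is different in style: rather than arguing by contradiction, it works directly by introducing the ratio $\theta(d):=\big(l(d)-\bar\kappa\|d\|^2\big)/\|Md\|^2$ on the set $\{d:\|d\|=1,\ Md\ne 0\}$, observes that $\theta$ is l.s.c.\ and blows up to $+\infty$ as $d$ approaches the boundary (where $Md=0$), and invokes Weierstrass to obtain a finite minimum $c^*$; then any $c\ge -c^*$ does the job, in fact with $\kappa=\bar\kappa$. The paper's route is thus more constructive and nominally yields the sharper constant $\kappa=\bar\kappa$, but it leans on the boundary-blow-up claim for $\theta$, which is a slightly delicate $0/0$ analysis when $l(d_0)=\bar\kappa$ exactly. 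Your compactness-plus-contradiction route sidesteps that issue entirely at the modest cost of producing $\kappa=\bar\kappa/2$ rather than $\bar\kappa$, which is immaterial for the lemma as stated.
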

\begin{proof}
Since $l$ is positively homogeneous of degree two with $l(0)\geq0$, it suffices to verify the claimed inequality for all $d$ satisfying $Md\neq 0$ and $\|d\|=1$. Specifically, we aim to show that
\begin{align*}
l(d)+c\|Md\|^{2}\geq\kappa\|d\|^{2}\;\mbox{ whenever }\; Md\neq 0,\, \|d\|=1.
\end{align*}
Consider the function $\theta(d):=\frac{l(d)-\bar{\kappa}\|d\|^{2}}{\|Md\|^{2}}$ on the compact set $\{d\,|\,\|d\|=1,\,Md\neq 0\}$. This function is l.s.c.\ and tends to $\infty$ as $d$ approaches any boundary point of the set. By the classical Weierstrass theorem, $\theta$ attains its minimum denoted by $c^{*}$. Consequently, the claimed estimate is satisfied for any number $c\geq-c^{*}$.
\end{proof}

\begin{lemma}\label{lem-domain-second-derivative}
Let $r:\mathcal{R}^{m}\rightarrow \overline{\mathcal{R}}$ with $\bar{x}\in\operatorname{dom}r$ and $\bar{u}\in\partial r(\bar{x})$ satisfy Assumption~{\rm\ref{assump-1}}. Then 
\begin{align}\label{eq:domain-second-order-subderivative}
\operatorname{dom}d^{2}r(\bar{x},\bar{u})=\left\{d\,\left|\,dr(\bar{x})(d)=\langle \bar{u},d\rangle\right.\right\}=\operatorname{dom}D(\partial r)(\bar{x},\bar{u}).
\end{align}
\end{lemma}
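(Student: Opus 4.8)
The statement asserts the equality of three sets: the domain of the second-order subderivative $d^2r(\bar x,\bar u)$, the ``critical subspace-like'' set $C:=\{d\mid dr(\bar x)(d)=\langle\bar u,d\rangle\}$, and the domain of the graphical derivative $D(\partial r)(\bar x,\bar u)$. The plan is to prove the two equalities separately, each exploiting a different part of Assumption~\ref{assump-1}.

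For the first equality $\operatorname{dom}d^2r(\bar x,\bar u)=C$, I would argue as follows. The inclusion $\operatorname{dom}d^2r(\bar x,\bar u)\subset C$ is the easy direction: if $d^2r(\bar x,\bar u)(d)<\infty$, then along the defining $\liminf$ in \eqref{2subder} the first-order difference quotients $\tfrac{r(\bar x+td')-r(\bar x)}{t}-\langle\bar u,d'\rangle$ must stay bounded (otherwise dividing by $\tfrac12 t^2$ would blow up), which forces $dr(\bar x)(d)\le\langle\bar u,d\rangle$; the reverse inequality $dr(\bar x)(d)\ge\langle\bar u,d\rangle$ is automatic from $\bar u\in\partial r(\bar x)$ (for convex $r$ the subderivative dominates the linearization). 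For the reverse inclusion $C\subset\operatorname{dom}d^2r(\bar x,\bar u)$, I would invoke parabolic regularity (Definition~\ref{par-reg}): for $d\in C$ the parabolic epi-differentiability hypothesis of Assumption~\ref{assump-1} guarantees $\operatorname{dom}d^2r(\bar x)(d,\cdot)\ne\emptyset$, so the left-hand side of the parabolic regularity identity, $\inf_w\{d^2r(\bar x)(d,w)-\langle\bar u,w\rangle\}$, is a genuine infimum over a nonempty set; it is moreover bounded below (by convexity/properness considerations, or because the identity equates it to $d^2r(\bar x,\bar u)(d)>-\infty$ — here one can use that for convex $r$ the second-order subderivative is never $-\infty$ on its domain), hence finite, and by parabolic regularity it equals $d^2r(\bar x,\bar u)(d)$, putting $d$ in the domain.

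For the second equality $C=\operatorname{dom}D(\partial r)(\bar x,\bar u)$, the cleanest route is through the known identity relating the graphical derivative of $\partial r$ to the second-order subderivative of $r$. For a convex l.s.c.\ function that is (parabolically regular and) twice epi-differentiable, one has $D(\partial r)(\bar x,\bar u)=\partial\big(\tfrac12 d^2r(\bar x,\bar u)\big)$ — this is the proto-differentiability/Attouch-type result (see Rockafellar--Wets, Chapter~13); under Assumption~\ref{assump-1} parabolic regularity yields twice epi-differentiability, so this identity applies. Then $\operatorname{dom}D(\partial r)(\bar x,\bar u)=\operatorname{dom}\partial\big(\tfrac12 d^2r(\bar x,\bar u)\big)$, and since $d^2r(\bar x,\bar u)$ is a proper l.s.c.\ convex positively-homogeneous-of-degree-two function, the domain of its subdifferential and the domain of the function itself have the same closure and, more to the point, the same ``linear hull'' structure; combined with the first equality this closes the loop. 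Alternatively, I would give a direct argument: $(d,z)\in\mathcal{T}_{\operatorname{gph}\partial r}(\bar x,\bar u)$ requires sequences $t^k\downarrow 0$, $(x^k,u^k)\to(\bar x,\bar u)$ in $\operatorname{gph}\partial r$ with $(x^k-\bar x)/t^k\to d$; monotonicity of $\partial r$ gives $\langle u^k-\bar u,x^k-\bar x\rangle\ge0$, and passing to the limit after normalization shows $d\in C$, while conversely parabolic epi-differentiability lets one construct an admissible curve realizing any $d\in C$ as a graphical-derivative direction.

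\textbf{Anticipated main obstacle.} The delicate point is the reverse inclusion $C\subset\operatorname{dom}d^2r(\bar x,\bar u)$: a priori the parabolic regularity identity could read $+\infty=+\infty$ vacuously if one is not careful, so the real content is that parabolic epi-differentiability at $\bar x$ for \emph{every} $d\in C$ (which is exactly what Assumption~\ref{assump-1} provides, since $\{d\mid dr(\bar x)(d)=\langle\bar u,d\rangle\}$ is precisely the index set in that assumption) makes $\operatorname{dom}d^2r(\bar x)(d,\cdot)$ nonempty, and then a lower-boundedness argument for $\inf_w\{d^2r(\bar x)(d,w)-\langle\bar u,w\rangle\}$ is needed to conclude finiteness rather than $-\infty$. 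Establishing that lower bound — most naturally by citing that for convex $r$ the quantity $d^2r(\bar x,\bar u)(d)$ is $>-\infty$ whenever it is finite from above, together with parabolic regularity — is where the argument must be handled with care. The rest is bookkeeping with the definitions \eqref{subder}, \eqref{parab-sub}, \eqref{2subder}, and \eqref{gder}.
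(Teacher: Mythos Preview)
Your plan for the first equality is sound and essentially what the cited result \cite[Proposition~3.8]{Mohammadi2020} does: parabolic epi-differentiability gives $\operatorname{dom}d^{2}r(\bar x)(d,\cdot)\ne\emptyset$ for $d\in C$, convexity gives $d^{2}r(\bar x,\bar u)\ge 0$ (so the parabolic-regularity infimum cannot be $-\infty$), and the identity then yields finiteness.

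The real gap is in the second equality, and you have misidentified the main obstacle. You correctly invoke $D(\partial r)(\bar x,\bar u)=\partial\!\big(\tfrac12 d^{2}r(\bar x,\bar u)\big)$ (this is \cite[Theorem~13.40]{Rockafellar1998}, available once twice epi-differentiability holds). But your passage from $\operatorname{dom}\partial\!\big(\tfrac12 d^{2}r(\bar x,\bar u)\big)$ to $\operatorname{dom}d^{2}r(\bar x,\bar u)$ is hand-waved: saying the two domains ``have the same closure'' and ``the same linear hull structure'' does not prove they coincide. For a proper l.s.c.\ convex function one only has $\operatorname{ri}(\operatorname{dom}f)\subset\operatorname{dom}\partial f\subset\operatorname{dom}f$ in general, and positive homogeneity of degree two does not make this automatic. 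Your alternative monotonicity argument for $\operatorname{dom}D(\partial r)(\bar x,\bar u)\subset C$ does not go through either: from $\langle u^{k}-\bar u,x^{k}-\bar x\rangle\ge 0$ you obtain $\langle z,d\rangle\ge 0$, which says nothing about $dr(\bar x)(d)=\langle\bar u,d\rangle$.

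The paper's proof fills precisely this gap. It uses \cite[Proposition~13.20]{Rockafellar1998} to write $d^{2}r(\bar x,\bar u)=\gamma_{C}^{2}$ as the square of a gauge; then, since $\gamma_{C}$ is sublinear with $\gamma_{C}(0)=0$, it is subdifferentiable throughout its domain (\cite[Proposition~2.124]{Bonnans2000}). From a subgradient $w\in\partial\gamma_{C}(v)$ the paper manufactures a one-sided local Lipschitz lower bound $d^{2}r(\bar x,\bar u)(y)\ge d^{2}r(\bar x,\bar u)(v)-2\gamma_{C}(v)\|w\|\,\|y-v\|$, which via \cite[Proposition~2.1]{Mohammadi2022} yields $\partial d^{2}r(\bar x,\bar u)(v)\ne\emptyset$ at every non-minimizing $v\in\operatorname{dom}d^{2}r(\bar x,\bar u)$ (minimizers trivially have $0$ as a subgradient). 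Only then does \cite[Theorem~13.40]{Rockafellar1998} close the loop. This subdifferentiability-on-the-entire-domain step is the genuine technical content you are missing.
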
	
\begin{proof}
The first equality in \eqref{eq:domain-second-order-subderivative} follows from \cite[Proposition~3.8]{Mohammadi2020}. To proceed further, deduce from
\cite[Proposition~13.20]{Rockafellar1998}, that $d^{2}r(\bar{x},\bar{u})$ is convex and is expressed as
\begin{align*}
d^{2}r(\bar{x},\bar{u})=\gamma_{C}^{2}
\end{align*} 
via the gauge of $C=\{v\,|\,d^{2}r(\bar{x},\bar{u})(v)\leq 1\}$, which is a closed convex set containing the origin. Since $\gamma_{C}$ is a positively homogeneous convex function with $\gamma_{C}(0)=0$, it follows from \cite[Proposition~2.124]{Bonnans2000} that it is subdifferentiable at any $v \in \operatorname{dom} d^{2}r(\bar{x},\bar{u})$. Specifically, for every such $v$ which is not a minimizer of $d^{2}r(\bar{x},\bar{u})$, there exists a subgradient $w \in \partial\gamma_{C}(v)$ satisfying
\begin{align*}
\gamma_{C}(y)\geq\gamma_{C}(v)+\langle w,y-v\rangle.
\end{align*}
Applying \cite[Propositions~13.5 and 13.20]{Rockafellar1998} tells us that
\begin{align*}
d^{2}r(\bar{x},\bar{u})(v)=\gamma^{2}_{C}(v)>0
\end{align*}
and there exists a neighborhood $\mathcal{O}$ of $v$ such that for any $y \in \mathcal{O}$ we get $d^{2}r(\bar{x},\bar{u})(y) > 0$ and
\begin{align*}
\gamma_{C}(v)+\langle w,y-v\rangle>0.
\end{align*} 
This implies therefore the relationships
\begin{align*}
d^{2}r(\bar{x},\bar{u})(y)=\gamma_{C}^{2}(y)&\geq\gamma_{C}^{2}(v)+2\gamma_{C}(v)\langle w,y-v\rangle+\langle w,y-v\rangle^{2}\\
&\geq d^{2}r(\bar{x},\bar{u})(v)-2\gamma_{C}(v)\|w\|\|y-v\|.
\end{align*}
It follows from \cite[Proposition~2.1]{Mohammadi2022} that $\partial d^{2}r(\bar{x},\bar{u})(v) \neq \emptyset$. Conversely, if $v \in \mathop{\arg\min}\limits_{x\in\mathcal{R}^{m}} {d^{2}r(\bar{x},\bar{u})(x)}$, then $0 \in \partial d^{2}r(\bar{x},\bar{u})(v)$ and the fulfillment of \eqref{eq:domain-second-order-subderivative} is a consequence of \cite[Theorem~13.40]{Rockafellar1998}.
\end{proof}

\begin{lemma}\label{lemma-graphical-second-order-derivative}
Let $r:\mathcal{R}^{m}\rightarrow \overline{\mathcal{R}}$ with $\bar{x}\in\operatorname{dom}r$ and $\bar{u}\in\partial r(\bar{x})$ satisfy Assumption~{\rm\ref{assump-1}}. Given any $v,d\in\mathcal{R}^{m}$, we have that $v\in D\operatorname{Prox}_{r}(\bar{x}+\bar{u})(v+d)$ holds if and only if $d\in D(\partial r)(\bar{x},\bar{u})(v)$. Furthermore, $d^{2}r(\bar{x},\bar{u})(v)=\langle v,d\rangle$ whenever $d\in D(\partial r)(\bar{x},\bar{u})(v)$,
\end{lemma}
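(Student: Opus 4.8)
The plan is to treat the two assertions of the lemma separately; only the displayed identity will actually require Assumption~\ref{assump-1}.

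\emph{The equivalence.} First I would argue purely by resolvent calculus, with no second-order input. Since $r$ is l.s.c.\ proper convex, Moreau's resolvent identity gives $\operatorname{Prox}_{r}=(I+\partial r)^{-1}$, and because $\bar u\in\partial r(\bar x)$ we have $\operatorname{Prox}_{r}(\bar x+\bar u)=\bar x$, so the reference point in $\gph\operatorname{Prox}_{r}$ is exactly $(\bar x+\bar u,\bar x)$. Applying the inverse rule from \eqref{eq:inverse-graphical-derivative} to $S=I+\partial r$ and then the sum rule from \eqref{eq:coderivative-sum} with the identity map $h=I$ (so $h'(\bar x)=I$) yields, for all $w,z\in\mathcal{R}^{m}$,
\begin{align*}
w\in D\operatorname{Prox}_{r}(\bar x+\bar u)(z)\ \Longleftrightarrow\ z\in D(I+\partial r)(\bar x,\bar x+\bar u)(w)=w+D(\partial r)(\bar x,\bar u)(w)\ \Longleftrightarrow\ z-w\in D(\partial r)(\bar x,\bar u)(w).
\end{align*}
Substituting $w=v$ and $z=v+d$ gives $v\in D\operatorname{Prox}_{r}(\bar x+\bar u)(v+d)\Leftrightarrow d\in D(\partial r)(\bar x,\bar u)(v)$, which is the asserted equivalence.

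\emph{The identity.} For $d^{2}r(\bar x,\bar u)(v)=\langle v,d\rangle$ I would pass through twice epi-differentiability. Under Assumption~\ref{assump-1} (parabolic regularity at $\bar x$ for $\bar u$ together with parabolic epi-differentiability at $\bar x$ along the critical directions) and using that $r$ is convex, the function $r$ is twice epi-differentiable at $\bar x$ for $\bar u$; this is exactly what the parabolic-regularity machinery of \cite{Mohammadi2020,Mohammadi2021,Mohammadi2022} (and \cite[Theorem~13.67]{Rockafellar1998}) provides, in the same spirit as the use of \cite[Theorem~13.40]{Rockafellar1998} in the proof of Lemma~\ref{lem-domain-second-derivative}. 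By the Poliquin--Rockafellar theorem \cite[Theorem~13.40]{Rockafellar1998}, $\partial r$ is then proto-differentiable at $\bar x$ for $\bar u$ and $D(\partial r)(\bar x,\bar u)=\partial q$ with $q:=\tfrac12 d^{2}r(\bar x,\bar u)$; here $q$ is proper l.s.c.\ convex by \cite[Proposition~13.20]{Rockafellar1998} and Lemma~\ref{lem-domain-second-derivative}, and directly from \eqref{2subder} together with $\bar u\in\partial r(\bar x)$ one checks that $q\ge 0$, $q(0)=0$, and $q$ is positively homogeneous of degree two, so that $\operatorname{dom}q$ is a convex cone.

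Finally I would invoke Euler's relation for $q$: if $d\in D(\partial r)(\bar x,\bar u)(v)=\partial q(v)$, then $v\in\operatorname{dom}q$, and inserting $y=(1+t)v\in\operatorname{dom}q$ into the subgradient inequality $q(y)\ge q(v)+\langle d,y-v\rangle$ gives $(2t+t^{2})q(v)\ge t\langle d,v\rangle$ for every $t>-1$; dividing by $t$ and letting $t\downarrow0$, then $t\uparrow0$, forces $2q(v)=\langle d,v\rangle$, that is $d^{2}r(\bar x,\bar u)(v)=\langle v,d\rangle$. The main obstacle is the passage from Assumption~\ref{assump-1} to twice epi-differentiability and the attendant proto-differentiability, which is precisely what upgrades $D(\partial r)(\bar x,\bar u)=\partial q$ from a one-sided inclusion to an exact identity; convexity of $r$ is used essentially here, whereas the resolvent computation and the Euler step are routine.
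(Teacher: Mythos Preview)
Your argument is correct; the equivalence part coincides verbatim with the paper's resolvent computation, while your derivation of the identity $d^{2}r(\bar x,\bar u)(v)=\langle v,d\rangle$ follows a genuinely different path. The paper, after observing twice epi-differentiability, invokes \cite[Theorem~13.21]{Rockafellar1998} and the Fenchel--Young equality \cite[Theorem~23.5]{Rockafellar1970} to obtain $\langle v,d\rangle=\tfrac12 d^{2}r(\bar x,\bar u)(v)+\tfrac12 d^{2}r^{*}(\bar u,\bar x)(d)$, then couples this with the gauge inequality $\langle v,d\rangle\le\sqrt{d^{2}r(\bar x,\bar u)(v)\,d^{2}r^{*}(\bar u,\bar x)(d)}$ from \cite[Corollary~15.3.2]{Rockafellar1970}; AM--GM forces the two second-order terms to coincide with $\langle v,d\rangle$. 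You instead identify $D(\partial r)(\bar x,\bar u)=\partial q$ via \cite[Theorem~13.40]{Rockafellar1998} and finish with the Euler relation for a degree-two positively homogeneous convex function. Your route is more self-contained (no dual function $r^{*}$, no gauge inequality) and makes the role of positive homogeneity transparent; the paper's route, on the other hand, extracts the extra information $d^{2}r(\bar x,\bar u)(v)=d^{2}r^{*}(\bar u,\bar x)(d)$ along the way.
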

\begin{proof}
We get by \cite[Proposition~3.8]{Mohammadi2020} that $r$ is twice epi-differentiable at $\bar{x}$ for $\bar{u}$. Since $\operatorname{Prox}_{r}(\cdot)=(I+\partial r)^{-1}(\cdot)$, it follows from \eqref{eq:inverse-graphical-derivative} and \eqref{eq:graphical-sum} the equivalence
$$
v\in D\operatorname{Prox}_{r}(\bar{x}+\bar{u})(v+d)\Longleftrightarrow d\in D(\partial r)(\bar{x},\bar{u})(v).
$$
Applying then \cite[Theorem~13.21]{Rockafellar1998} and \cite[Theorem~23.5]{Rockafellar1970} leads us to
\begin{align*}
\langle v,d\rangle=\frac{1}{2}d^{2}r(\bar{x},\bar{u})(v)+\frac{1}{2}d^{2}r^{*}(\bar{u},\bar{x})(d).
\end{align*}
Finally, it follows from \cite[Corollary~15.3.2]{Rockafellar1970} that
\begin{align*}
\langle v,d\rangle\leq\sqrt{d^{2}r(\bar{x},\bar{u})(v)d^{2}r^{*}(\bar{u},\bar{x})(d)},
\end{align*}
which verifies $d^{2}r(\bar{x},\bar{u})(v)=\langle v,d\rangle$ and thus completes the proof.
\end{proof}

\begin{lemma}\label{lem-quadratic-budle-Upsilon}
Let $r:\mathcal{R}^{m}\rightarrow \overline{\mathcal{R}}$ with $\bar{x}\in\operatorname{dom}r$ and $\bar{u}\in\partial r(\bar{x})$ satisfy Assumption~{\rm\ref{assump-1}} at $x$ for all $u\in\partial r(x)$ with $x$ in some neighborhood of $\bar{x}$ and Assumption~{\rm\ref{assump-2}} at $\bar{x}$ for $\bar{u}$, and let $\operatorname{quad}r(\bar{x},\bar{u})$ be the corresponding quadratic bundle. Then there exists a sequence $\{(x^{k},u^{k})\}$ such that $d^{2}r(x^{k},u^{k})$ epi-converges to $\Gamma_{r}(\bar{x},\bar{u})$ as $k\to\infty$, and $\Gamma_{r}(\bar{x},\bar{u})\in\operatorname{quad}r(\bar{x},\bar{u})$.
\end{lemma}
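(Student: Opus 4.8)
The plan is to produce the sequence among differentiability points of $\operatorname{Prox}_r$ whose Jacobians converge to the distinguished matrix $\overline W$ of Assumption~\ref{assump-2}, and then to verify that the associated second-order subderivatives are generalized quadratic forms epi-converging to $\Gamma_r(\bar x,\bar u)$. Since $r$ is l.s.c.\ convex, $\operatorname{Prox}_r$ is globally Lipschitz, hence differentiable on a dense set by Rademacher's theorem; and because $\overline W\in\Bar\grad\operatorname{Prox}_r(\bar x+\bar u)$, the definition in \eqref{Bsub} supplies differentiability points $a^k\to\bar x+\bar u$ with $W^k:=\operatorname{Prox}_r'(a^k)\to\overline W$, each $W^k$ being symmetric positive semidefinite. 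Set $x^k:=\operatorname{Prox}_r(a^k)$ and $u^k:=a^k-x^k$; then $u^k\in\partial r(x^k)$, $(x^k,u^k)\to(\bar x,\bar u)$, and, by the hypothesis on Assumption~\ref{assump-1}, that assumption is in force at $x^k$ for $u^k$ for all large $k$.

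First I would identify $d^2 r(x^k,u^k)$. Differentiability of $\operatorname{Prox}_r$ at $a^k$ forces its graphical derivative there to be the single-valued linear map $h\mapsto W^kh$, so, applying Lemmas~\ref{lem-domain-second-derivative} and \ref{lemma-graphical-second-order-derivative} at $(x^k,u^k)$, the inclusion $d\in D(\partial r)(x^k,u^k)(v)$ is equivalent to $v=W^k(v+d)$, whence $\operatorname{dom}d^2 r(x^k,u^k)=\operatorname{rge}W^k$ and, for $v=W^ky\in\operatorname{rge}W^k$,
\begin{align*}
d^2 r(x^k,u^k)(v)&=\big\langle W^ky,\,(I-W^k)y\big\rangle=\big\langle v,\,\big((W^k)^{\dagger}-I\big)v\big\rangle,
\end{align*}
with the value $+\infty$ elsewhere; in particular this coincides with $\Gamma_r(x^k,u^k)$ by the very form of \eqref{sovf}. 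The function so described is a generalized quadratic form — its domain is a subspace, it vanishes at the origin, and its subgradient graph is the subspace $\{(W^ky,(I-W^k)y)\mid y\in\mathcal{R}^{m}\}$ — so together with the twice epi-differentiability furnished by Assumption~\ref{assump-1} we conclude that $r$ is generalized twice differentiable at $x^k$ for $u^k$.

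It remains to show that $d^2 r(x^k,u^k)$ epi-converges to $\Gamma_r(\bar x,\bar u)$, which under Assumption~\ref{assump-2} is the closed form \eqref{sovf1}: $\Gamma_r(\bar x,\bar u)(v)=\langle v,(\overline W^{\dagger}-I)v\rangle$ on $\operatorname{rge}\overline W$ and $+\infty$ off it. For the recovery inequality, given $v\in\operatorname{rge}\overline W$ put $v^k:=W^k\overline W^{\dagger}v$; since $v^k=W^ky$ with $y:=\overline W^{\dagger}v$, we have $v^k\to v$ and $d^2 r(x^k,u^k)(v^k)=\langle W^ky,(I-W^k)y\rangle\to\langle\overline Wy,(I-\overline W)y\rangle=\Gamma_r(\bar x,\bar u)(v)$ from $W^k\to\overline W$, while the case $v\notin\operatorname{rge}\overline W$ is trivial. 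For the liminf inequality, note that $d^2 r(x^k,u^k)(v^k)=\langle v^k,(W^k)^{\dagger}v^k\rangle-\|v^k\|^2$ when $v^k\in\operatorname{rge}W^k$ (and $+\infty$ otherwise) and invoke the variational identity $\langle v,A^{\dagger}v\rangle=\sup_z\{2\langle z,v\rangle-\langle z,Az\rangle\}$, valid for symmetric $A\succeq 0$ and equal to $+\infty$ when $v\notin\operatorname{rge}A$, which exhibits $(A,v)\mapsto\langle v,A^{\dagger}v\rangle$ as lower semicontinuous; hence $\liminf_k d^2 r(x^k,u^k)(v^k)\ge\langle v,\overline W^{\dagger}v\rangle-\|v\|^2=\Gamma_r(\bar x,\bar u)(v)$ for every $v^k\to v$. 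Therefore $d^2 r(x^k,u^k)$ epi-converges to $\Gamma_r(\bar x,\bar u)$, and since the forms $q^k:=\tfrac12 d^2 r(x^k,u^k)$ are generalized quadratic with $r$ generalized twice differentiable at $x^k$ for $u^k$ and $(x^k,u^k)\to(\bar x,\bar u)$, this yields $\Gamma_r(\bar x,\bar u)\in\operatorname{quad}r(\bar x,\bar u)$ directly from the definition of the quadratic bundle.

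The step I expect to be the main obstacle is this last epi-convergence: the Moore--Penrose inverse $(W^k)^{\dagger}$ need not converge to $\overline W^{\dagger}$ when the rank drops in the limit, so one cannot simply pass to the limit in the $(W^k)^{\dagger}$-form of $d^2 r(x^k,u^k)$. The remedy is to carry the second-order subderivative in the \emph{primal} form $d^2 r(x^k,u^k)(W^ky)=\langle W^ky,(I-W^k)y\rangle$ for recovery and to use the lower-semicontinuous representation of $\langle v,A^{\dagger}v\rangle$ for the liminf, so that the whole argument is governed by $W^k\to\overline W$ alone (equivalently, one may appeal to Attouch's theorem via the Painlev\'e--Kuratowski convergence of the subgradient graphs $\{(W^ky,(I-W^k)y)\}$ to $\{(\overline Wy,(I-\overline W)y)\}$); Assumption~\ref{assump-2} is precisely what makes $\overline W$ both attainable as such a limit of Jacobians and responsible for the closed form \eqref{sovf1} of the target function.
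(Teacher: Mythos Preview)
Your proof is correct and takes a genuinely different route from the paper's. The paper first establishes the coderivative inclusion $\operatorname{rge}\widehat{D}^{*}\operatorname{Prox}_{r}(x+u)\subset\operatorname{rge}D\operatorname{Prox}_{r}(x+u)$ (via polarity arguments and Lemma~\ref{lemma-graphical-second-order-derivative}) and combines it with the coderivative representation \eqref{eq:equivalent-form-Gamma-g} of $\Gamma_r$ and the monotonicity inequality of \cite[Proposition~8.37]{Rockafellar1998} to obtain one half of the epi-convergence; for the other half it works along the differentiability sequence $W^k\to\overline W$ but invokes the assertion $\operatorname{rge}W^k=\operatorname{rge}\overline W$ for large $k$ in order to pass to the limit in $(W^k)^\dagger d$. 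You bypass the coderivative machinery entirely: after identifying $d^2r(x^k,u^k)$ explicitly from Lemmas~\ref{lem-domain-second-derivative}--\ref{lemma-graphical-second-order-derivative}, you obtain the recovery inequality from the primal parametrization $v^k=W^k\overline W^{\dagger}v$, and---this is your main new device---you settle the liminf inequality through the variational identity $\langle v,A^{\dagger}v\rangle=\sup_{z}\{2\langle z,v\rangle-\langle z,Az\rangle\}$, which is jointly l.s.c.\ in $(A,v)$ and therefore absorbs any rank drop along $W^k\to\overline W$. Your argument is more elementary and self-contained, and it sidesteps the range-stability step; the paper's approach, by contrast, keeps the lemma tied to the coderivative form \eqref{eq:equivalent-form-Gamma-g} of $\Gamma_r$ that is used elsewhere in the analysis.
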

\begin{proof} We start with deriving the inclusion
\begin{align}\label{quad-inc}
\operatorname{rge}\widehat{D}^{*}\operatorname{Prox}_{r}(x+u)\subset\left\{d\,\left|\,0\in D\operatorname{Prox}_{r}(x+u)(d)\right.\right\}^{\circ}=\operatorname{rge}D\operatorname{Prox}_{r}(x+u)\
\end{align}
for all $x\in\operatorname{dom}r$ and $u\in\partial r(x)$. Picking any $y\in\left\{d\,\left|\,0\in D\operatorname{Prox}_{r}(x+u)(d)\right.\right\}$ with $(y,0)\in\mathcal{T}_{\operatorname{gph}\operatorname{Prox}_{r}}(x+u)$ and taking a pair $(w,z)\in\widehat{\mathcal{N}}_{\operatorname{gph}\operatorname{Prox}_{r}}(x+u)$ with $w\in\operatorname{rge}\widehat{D}^{*}\operatorname{Prox}_{r}(x+u)$ yield $\langle y,w\rangle=\langle(y,0),(w,z)\rangle\leq 0$. This ensures therefore that
\begin{align*}
w\in\left\{d\,\left|\,0\in D\operatorname{Prox}_{r}(x+u)(d)\right.\right\}^{\circ}.
\end{align*} 
It follows from Lemma~\ref{lemma-graphical-second-order-derivative} that 
\begin{align*}
\left\{d\,\left|\,0\in D\operatorname{Prox}_{r}(x+u)(d)\right.\right\}=D(\partial r)(x,u)(0).
\end{align*}	
Moreover, we deduce from \cite[Proposition~3.3]{Tangwang2024} that
\begin{align}\label{eq:tangent-cone-graphical}
D(\partial r)(x,u)(0)=\Big\{d\,\Big|\,dr(x)(d)=\langle u,d\rangle\Big\}^{\circ}.
\end{align}
Taking the polar in both sides of \eqref{eq:tangent-cone-graphical} and combining Lemma~\ref{lem-domain-second-derivative} and Lemma~\ref{lemma-graphical-second-order-derivative}, we conclude that for $x\in\operatorname{dom}r$ and $u\in\partial r(x)$, it holds
\begin{align*}
\begin{array}{ll}
\operatorname{dom}D(\partial r)(x,u)&=\operatorname{rge}D\operatorname{Prox}_{r}(x+u)\\
&=\Big\{d\,\Big|\,dr(x)(d)=\langle u,d\rangle\Big\}\supset\operatorname{rge}\widehat{D}^{*}\operatorname{Prox}_{r}(x+u).
\end{array}
\end{align*}  
Therefore, for any $v\in\operatorname{rge}\widehat{D}^{*}\operatorname{Prox}_{r}(x+u)$, there exist $d,d'\in\mathcal{R}^{m}$ such that 
\begin{align*}
v\in\widehat{D}^{*}\operatorname{Prox}_{r}(x+u)(d),\ v\in D\operatorname{Prox}_{r}(x+u)(d'), 
\end{align*}
which readily verifies the claimed inclusion in \eqref{quad-inc}.

Fix $\xi\in D^{*}\operatorname{Prox}_{r}(\bar{x}+\bar{u})(\eta)$ and find by definition \eqref{lim-cod} sequences $(x^{k},u^{k})$ and $(\xi^{k},\eta^{k})$ with $\xi^{k}\in\widehat{D}^{*}\operatorname{Prox}_{r}(x^{k}+u^{k})(\eta^{k})$ such that $(x^{k},u^{k})\rightarrow(x,u)$ and $(\xi^{k},\eta^{k})\rightarrow(\xi,\eta)$. By \eqref{quad-inc}, there exist directions $d^{k}\in\mathcal{R}^{m}$ satisfying the equalities
\begin{align*}
\xi^{k}=D\operatorname{Prox}_{r}(x^{k}+u^{k})(d^{k})
\end{align*}
for all $k\in\mathbb N$. Using now Lemma~\ref{lemma-graphical-second-order-derivative} and \cite[Proposition~8.37]{Rockafellar1998} tells us that 
\begin{align*}
d^{2}r(x^{k},u^{k})(\xi^{k})=\langle\xi^{k},d^{k}-\xi^{k}\rangle\leq\langle\xi^{k},\eta^{k}-\xi^{k}\rangle.
\end{align*}
Passing to the limit as $k\rightarrow\infty$ and using the SOVF representation \eqref{eq:equivalent-form-Gamma-g} yield
\begin{align*}
\langle\xi^{k},\eta^{k}-\xi^{k}\rangle\rightarrow\langle\xi,\eta-\xi\rangle=\Gamma_{r}(\bar{x},\bar{u})(\xi),
\end{align*}
which implies in turn that 
\begin{align*}
\operatorname{epi}\Gamma_{r}(\bar{x},\bar{u})\subset\lim_{k\rightarrow\infty}\operatorname{epi}d^{2}r(x^{k},u^{k}).
\end{align*}
Take any sequence $(x^{k},u^{k})$ such that $u^{k}\in\partial r(x^{k})$, $(x^{k},u^{k})\rightarrow(x,u)$, and $\operatorname{Prox}_{r}$ is differentiable at $x^{k}+u^{k}$ with $\operatorname{Prox}'_{r}(x^{k}+u^{k})\rightarrow\overline{W}$. Given $d\in\bigcup\limits_{W\in J\operatorname{Prox}_{r}(\bar{x}+\bar{u})}\operatorname{rge}W$, we get
\begin{align*}
D\operatorname{Prox}_{r}(x^{k}+u^{k})(d)=\operatorname{Prox}'_{r}(x^{k}+u^{k})d.
\end{align*}
Moreover, the equality $\operatorname{rge}\operatorname{Prox}'_{r}(x^{k}+u^{k})=\operatorname{rge}\overline{W}$ holds for large $k$. Combining Lemma~\ref{lemma-graphical-second-order-derivative}, this leads us to
\begin{align*}
\Gamma_{r}(\bar{x},\bar{u})(d)\leq\langle d,\overline{W}^{\dagger}d-d\rangle=\lim_{k\rightarrow\infty}\langle d,(\operatorname{Prox}'_{r}(x^{k}+u^{k}))^{\dagger}d-d\rangle=\lim_{k\rightarrow\infty}d^{2}r(x^{k},u^{k})(d),
\end{align*}
which ensures the fulfillment of the epigraph inclusion
\begin{align*}
\lim_{k\rightarrow\infty}\operatorname{epi}d^{2}r(x^{k},u^{k})\subset\operatorname{epi}\Gamma_{r}(\bar{x},\bar{u})
\end{align*}
and thus completes the proof of the lemma.
\end{proof}

Now we are in a position to establish the main second-order characterizations of strong variational sufficiency for local optimality in general composite optimization.

\begin{theorem}\label{thm:main-thm-SSOSC-strong-variational-sufficient-cond} Let $(\bar{x},\bar{u})$ solve the KKT system \eqref{eq:kkt-problem-P}. Suppose that $g$ satisfies Assumptions~{\rm\ref{assump-1} at $F(x)$ for all $u\in\partial g(F(x))$ with $F(x)$ in some neighborhood of $F(\bar{x})$ and Assumption~\ref{assump-2}} at $F(\bar{x})$ for $\bar{u}$. Then the following assertions are equivalent:

{\bf(i)} The strong variational sufficient condition for local optimality in $(P)$ holds at $\bar{x}$ for $\bar{u}$.

{\bf(ii)} The SSOSC \eqref{eq:SSOSC} holds at $\bar{x}$ for $\bar{u}$.

{\bf(iii)} There exists a sufficiently large number $\sigma>0$ such that all the matrices from the generalized Hessian $\Bar{\partial}^{2}_{xx}\mathcal{L}_{\sigma}(\bar{x},\bar{u})$ are positive-definite.
\end{theorem}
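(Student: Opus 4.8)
The plan is to establish the cycle (i) $\Rightarrow$ (iii) $\Rightarrow$ (ii) $\Rightarrow$ (i), using Theorem~\ref{thm:quad-bundle} as the bridge between the variational-sufficiency side and the second-order side, and using Lemma~\ref{lem-quadratic-budle-Upsilon} together with Theorem~\ref{prop-ineq-norm-W} to pin down the quadratic bundle of $g$. First I would record the key structural fact: by Lemma~\ref{lem-quadratic-budle-Upsilon} the SOVF $\Gamma_g(F(\bar x),\bar u)=\Upsilon_g(F(\bar x),\bar u)$ (using the simplification \eqref{sovf1} valid under Assumption~\ref{assump-2}) is itself a member of $\operatorname{quad}g(F(\bar x),\bar u)$, and moreover it is the ``smallest'' one in the sense that every $q\in\operatorname{quad}g(F(\bar x),\bar u)$ satisfies $q\ge\Gamma_g(F(\bar x),\bar u)$ on $\operatorname{rge}\overline W = \ker(I-\overline W\overline W^{\dagger})$ while being $+\infty$ outside; the lower bound direction is exactly what Theorem~\ref{prop-ineq-norm-W} (the estimate \eqref{cod-est}) is built to deliver, once one translates second-order subderivatives at nearby points into coderivatives of the proximal mapping via Lemma~\ref{lemma-graphical-second-order-derivative}. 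Granting this, Theorem~\ref{thm:quad-bundle} immediately gives (i) $\Leftrightarrow$ (ii), since the defining inequality of strong variational sufficiency over all $q\in\operatorname{quad}g$ reduces, by the minimality of $\Gamma_g$, to the single inequality \eqref{eq:SSOSC}.

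Next I would handle (ii) $\Leftrightarrow$ (iii). Fix $\sigma>0$ and a matrix $U\in J\operatorname{Prox}_{\sigma g^{*}}(\bar u+\sigma F(\bar x))$; the generic element of $\bar\partial^2_{xx}\mathcal L_\sigma(\bar x,\bar u)$ applied to $d$ has the form $\langle\nabla^2_{xx}L(\bar x,\bar u)d,d\rangle + \sigma\langle U F'(\bar x)d, F'(\bar x)d\rangle$. Writing $Y=F'(\bar x)d$, positive-definiteness of all such matrices means
\begin{align*}
\langle\nabla^2_{xx}L(\bar x,\bar u)d,d\rangle + \sigma\,\langle U Y,Y\rangle > 0\quad\text{for all }d\ne0,\ \text{all }U\in J\operatorname{Prox}_{\sigma g^{*}}(\bar u+\sigma F(\bar x)).
\end{align*}
The point is to show that, as $\sigma\to\infty$, the quadratic term $\inf_{U}\sigma\langle U Y,Y\rangle$ converges (in the relevant epigraphical sense, with the correct $+\infty$ behavior for $Y\notin\ker(I-\overline W\overline W^\dagger)$) to $\Upsilon_g(F(\bar x),\bar u)(Y)=\langle Y,(\overline W^\dagger-I)Y\rangle$. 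This is precisely the content of the asymptotic relations \eqref{cod-est}--\eqref{sigma-cod-est} in Theorem~\ref{prop-ineq-norm-W}, after passing through Proposition~\ref{prop:relation-coderivative-element} to relate $J\operatorname{Prox}_{\sigma g^{*}}$ at $\bar u+\sigma F(\bar x)$ with $J\operatorname{Prox}_{g}$ at $F(\bar x)+\bar u$ (the $r\leftrightarrow r^*$ Moreau duality and the scaling $I+(\sigma-1)U$). So (iii) for some large $\sigma$ is equivalent to the strict inequality $\langle\nabla^2_{xx}L(\bar x,\bar u)d,d\rangle + \Upsilon_g(F(\bar x),\bar u)(F'(\bar x)d)>0$ over $F'(\bar x)d\in\ker(I-\overline W\overline W^\dagger)$, $d\ne0$, which by the reformulation recorded after \eqref{sovf1} is exactly the SSOSC. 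One direction (SSOSC $\Rightarrow$ (iii) for large $\sigma$) needs the compactness/positive-homogeneity argument of Lemma~\ref{lem:postive-homogeneous-property} to convert the conditional positivity on the subspace into a uniform quadratic lower bound that survives the limit; the reverse direction is a limiting argument plus the lower estimate in \eqref{cod-est}.

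\textbf{Main obstacle.} I expect the delicate point to be the uniformity in $\sigma$ in the step (ii) $\Rightarrow$ (iii): one must show that a \emph{single} sufficiently large $\sigma$ works simultaneously for all directions $d$ and all $U\in J\operatorname{Prox}_{\sigma g^{*}}$. The eigenvalue expansion in the proof of Theorem~\ref{prop-ineq-norm-W} shows pointwise convergence $\sigma(I+(\sigma-1)U)^{-1}U\to UU^\dagger$, but the matrices $U$ themselves vary with $\sigma$ and range over a set whose structure changes; the resolution is to note that $J\operatorname{Prox}_{\sigma g^{*}}(\bar u+\sigma F(\bar x))$ stabilizes appropriately (via Proposition~\ref{prop:relation-coderivative-element} its elements are governed by the fixed set $J\operatorname{Prox}_g(F(\bar x)+\bar u)$), bound the deficiency in $\sigma\langle UY,Y\rangle$ against $\langle UU^\dagger Y,Y\rangle$ uniformly in $U$ on the relevant compact sphere, and then invoke Lemma~\ref{lem:postive-homogeneous-property} with $M=I-\overline W\overline W^\dagger$ to absorb the residual. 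The rest of the argument is bookkeeping with the Moreau identity, the sum rules \eqref{eq:coderivative-sum}, and the normal-cone transfer Lemma~\ref{lem:normal-cone-trans}.
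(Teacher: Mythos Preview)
Your sketch of (ii)$\Rightarrow$(iii) is essentially the paper's argument: use Proposition~\ref{prop:relation-coderivative-element} to express elements of $J\operatorname{Prox}_{\sigma g^{*}}(\bar u+\sigma F(\bar x))$ through the fixed set $J\operatorname{Prox}_{g}(F(\bar x)+\bar u)$, invoke the asymptotics \eqref{cod-est}--\eqref{sigma-cod-est} from Theorem~\ref{prop-ineq-norm-W}, and absorb the off-subspace contribution via Lemma~\ref{lem:postive-homogeneous-property} with $M=(I-\overline W\overline W^{\dagger})F'(\bar x)$. You correctly identify the uniformity-in-$\sigma$ issue as the delicate point, and the resolution you propose is exactly what the paper carries out in \eqref{eq:main-theorem-1}--\eqref{eq:main-theorem-7}. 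Likewise, (i)$\Rightarrow$(ii) via $\Gamma_g\in\operatorname{quad}g(F(\bar x),\bar u)$ and Theorem~\ref{thm:quad-bundle} matches the paper.

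The gap is in your route for (ii)$\Rightarrow$(i). You assert that $\Gamma_g(F(\bar x),\bar u)$ is the \emph{minimal} element of $\operatorname{quad}g(F(\bar x),\bar u)$, i.e.\ that every $q$ in the bundle dominates it. Lemma~\ref{lem-quadratic-budle-Upsilon} only gives membership, not minimality, and the tool you cite for the lower bound---the estimate \eqref{cod-est} in Theorem~\ref{prop-ineq-norm-W}---bounds $\|z\|^2$ for $z\in D^{*}\operatorname{Prox}_{\sigma g^{*}}(\bar u+\sigma F(\bar x))$; it says nothing about epi-limits of $\tfrac12 d^{2}g(x^{k},u^{k})$ along arbitrary generalized-twice-differentiable sequences $(x^{k},u^{k})\to(F(\bar x),\bar u)$. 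To prove minimality you would need to control $\langle v,(W_k^{\dagger}-I)v\rangle$ for $W_k=\operatorname{Prox}'_g(x^{k}+u^{k})$ at \emph{nearby} points, not for $W\in J\operatorname{Prox}_g(F(\bar x)+\bar u)$, and Moore--Penrose inverses do not behave well under rank-dropping limits. Assumption~\ref{assump-2} is stated only at the reference pair, so it does not directly cover this.

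The paper avoids this entirely: it closes the cycle via (ii)$\Rightarrow$(iii)$\Rightarrow$(i), where (iii)$\Rightarrow$(i) is imported from \cite[Theorem~3]{Rockafellar2023} (positive-definiteness of the augmented-Lagrangian generalized Hessian implies strong variational sufficiency). With that external input, no comparison between $\Gamma_g$ and the rest of the quadratic bundle is needed. If you want to keep your direct (ii)$\Rightarrow$(i) route, you must supply an independent proof that every $q\in\operatorname{quad}g(F(\bar x),\bar u)$ satisfies $q\ge\Gamma_g(F(\bar x),\bar u)$; otherwise, replace that step by the paper's detour through (iii) and \cite[Theorem~3]{Rockafellar2023}.
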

\begin{proof}
To verify (ii)$\Longrightarrow$(iii), deduce from \eqref{sovf1} and the imposed SSOSC at $\bar{x}$ for $\bar{u}$ that taking any $\bar{d}\ne 0$ with $F'(\bar{x})\bar{d}\in\operatorname{rge}\overline{W}=\operatorname{ker}\left(I-\overline{W}\overline{W}^{\dagger}\right)$ yields the existence of $\bar{\kappa}>0$ such that
\begin{align*}
\left\langle \nabla_{xx}^{2}L(\bar{x},\bar{u})\bar{d},\bar{d}\right\rangle+\Upsilon_{g}(F(\bar{x}),\bar{u})(F'(\bar{x})\bar{d})\geq\bar{\kappa}\|\bar{d}\|^{2}.
\end{align*}
It is trivial when $F'(\bar{x})\bar{d}=0$; therefore, we assume below that $F'(\bar{x})\bar{d}$ is nonzero. Consequently, Lemma~\ref{lem:postive-homogeneous-property} implies that for any $\bar{d}\in\mathcal{R}^{n}$ there exist $\kappa\in(0,\bar{\kappa}/2]$ and $c>0$ with
\begin{align}\label{eq:main-theorem-ssosc-ineq}
\left\langle \nabla_{xx}^{2}L(\bar{x},\bar{u})\bar{d},\bar{d}\right\rangle+\Upsilon_{g}(\bar{x},\bar{u})(F'(\bar{x})\bar{d})+c\|(I-\overline{W}\overline{W}^{\dagger})F'(\bar{x})\bar{d}\|^{2}\geq2\kappa\|\bar{d}\|^{2}.
\end{align}
Denoting $d:=F'(\bar{x})\bar{d}$, we deduce from 
Proposition~\ref{prop:relation-coderivative-element} that for any $v\in D^{*}\operatorname{Prox}_{\sigma g^{*}}(\bar{u}+\sigma F(\bar{x}))(d)$, there exists $U\in J\operatorname{Prox}_{g}(F(\bar{x})+\bar{u})$ satisfying 
\begin{align}\label{eq:equality-coderivative}
v-d=U(-d-(\sigma-1)v),\quad v=(I+(\sigma-1)U)^{-1}(I-U)d.
\end{align}
Furthermore, setting $y:=v-d$ and $z:=-d-(\sigma-1)v$ in 
Assumption~\ref{assump-2} gives us 
\begin{align*}
\langle \sigma v,d-v\rangle\geq\left\langle v-d,(\overline{W}^{\dagger}-I)(v-d)\right\rangle,
\end{align*}
which being combined with \eqref{eq:coderivative-norm-ineq} and \eqref{eq:equality-coderivative} yields
\begin{align*}
\sigma\langle v,d\rangle&\geq\left\langle v-d,(\overline{W}^{\dagger}-I)(v-d)\right\rangle+\sigma\langle v,v\rangle\\
&\geq\left\langle v-d,(\overline{W}^{\dagger}-I)(v-d)\right\rangle+\sigma\|(I-UU^{\dagger})d\|^{2}\\
&=\left\langle\sigma(I+(\sigma-1)U)^{-1}Ud,\sigma(\overline{W}^{\dagger}-I)(I+(\sigma-1)U)^{-1}Ud\right\rangle+\sigma\|(I-UU^{\dagger})d\|^{2}.
\end{align*}
Combining the latter with \eqref{sigma-cod-est} allows us to conclude that for any $\varepsilon>0$, there exists $\sigma>\bar{\sigma}$ with
\begin{align}\label{eq:main-theorem-1}
&\left\langle\sigma(I+(\sigma-1)U)^{-1}Ud,\sigma(\overline{W}^{\dagger}-I)(I+(\sigma-1)U)^{-1}Ud\right\rangle+\sigma\|(I-UU^{\dagger})d\|^{2}-\left\langle d,(\overline{W}^{\dagger}-I)d\right\rangle\nonumber\\
&\geq\left\langle UU^{\dagger}d,(\overline{W}^{\dagger}-I)UU^{\dagger}d\right\rangle-\varepsilon\|d\|^{2}+\sigma\|(I-UU^{\dagger})d\|^{2}-\left\langle d,(\overline{W}^{\dagger}-I)d\right\rangle.
\end{align}
Let $d:=d_{1}+d_{2}$, where $d_{1}\in\operatorname{rge}U$ and $d_{2}\in\operatorname{ker}(U)$. Then $d_{1}=UU^{\dagger}d$ and $d_{2}=(I-UU^{\dagger})d$. Consequently, we get the equalities
\begin{align}\label{eq:main-theorem-2}
\begin{array}{ll}
&\left\langle UU^{\dagger}d,(\overline{W}^{\dagger}-I)UU^{\dagger}d\right\rangle+\sigma\|(I-UU^{\dagger})d\|^{2}-\left\langle d,(\overline{W}^{\dagger}-I)d\right\rangle\\
&=\left\langle d_{1},(\overline{W}^{\dagger}-I)d_{1}\right\rangle+\sigma\|d_{2}\|^{2}-\left\langle d_{1}+d_{2},(\overline{W}^{\dagger}-I)(d_{1}+d_{2})\right\rangle\\
&=\sigma\|d_{2}\|^{2}-2\left\langle d_{1},(\overline{W}^{\dagger}-I)d_{2}\right\rangle-\left\langle d_{2},(\overline{W}^{\dagger}-I)d_{2}\right\rangle.
\end{array}
\end{align}
If $d_{2}=\overline{W}^{\dagger}d_{2}$, let $\kappa':=\kappa/\|F'(\bar{x})\|^{2}>0$ and then apply \eqref{eq:main-theorem-1} and \eqref{eq:main-theorem-2} to get the inequality
\begin{align}\label{eq:main-theorem-3}
&\left\langle\sigma(I+(\sigma-1)U)^{-1}Ud,\sigma(\overline{W}^{\dagger}-I)(I+(\sigma-1)U)^{-1}Ud\right\rangle+\sigma\|(I-UU^{\dagger})d\|^{2}+\kappa'\|d\|^{2}\nonumber\\
&\geq\left\langle d,(\overline{W}^{\dagger}-I)d\right\rangle+\eta\|d_{2}\|^{2}=\Upsilon_{g}(F(\bar{x}),\bar{u})(d)+\eta\|d_{2}\|^{2}
\end{align}
if $\sigma>0$ is sufficiently large and $\eta=\sigma$. In the case where $d_{2}\ne\overline{W}^{\dagger}d_{2}$, it follows that $d_{2}\ne 0$,
\begin{align}\label{eq:main-theroem-4}
\begin{array}{ll}
&\sigma\|d_{2}\|^{2}-2\left\langle d_{1},(\overline{W}^{\dagger}-I)d_{2}\right\rangle-\left\langle d_{2},(\overline{W}^{\dagger}-I)d_{2}\right\rangle\\
&\quad\qquad\geq\sigma\|d_{2}\|^{2}-2\|\overline{W}^{\dagger}-I\|\cdot\|d_{1}\|\cdot\|d_{2}\|-\|\overline{W}^{\dagger}-I\|\cdot\|d_{2}\|^{2},
\end{array}
\end{align}
and there exists $\sigma>\sigma'$ for which
\begin{align}\label{eq:main-theorem-5}
\left(\frac{1}{2}\sigma-\|\overline{W}^{\dagger}-I\|\right)\|d_{2}\|\ge 2\|\overline{W}^{\dagger}-I\|\cdot\|d_{1}\|.
\end{align}
Combining \eqref{eq:main-theorem-1}, \eqref{eq:main-theorem-2}, \eqref{eq:main-theroem-4}, and \eqref{eq:main-theorem-5} ensures that 
\eqref{eq:main-theorem-3} holds for $\sigma > \sigma’$ with $\eta = \frac{1}{2}\sigma$. The conducted analysis establishes the existence of a sufficiently large parameter $\sigma$ such that 
\begin{align}\label{eq:main-theorem-6}
\sigma\langle v,d\rangle+\kappa'\|d\|^{2}\geq\Upsilon_{g}(F(\bar{x}),\bar{u})(d)+\frac{\sigma}{2}\|d_{2}\|^{2}=\Upsilon_{g}(F(\bar{x}),\bar{u})(d)+\frac{\sigma}{2}\|(I-UU^{\dagger})d\|^{2}
\end{align}
for any $d\in\operatorname{rge}\overline{W}$ and $v\in D^{*}\operatorname{Prox}_{\sigma g^{*}}(\bar{u}+\sigma F(\bar{x}))(d)$. Consequently, it follows from \eqref{cod-est} of Theorem~\ref{prop-ineq-norm-W} along with \eqref{eq:main-theorem-6} and \eqref{eq:main-theorem-ssosc-ineq} that the inequality
\begin{align}\label{eq:main-theorem-7}
\left\langle \nabla_{xx}^{2}L(\bar{x},\bar{u})\bar{d},\bar{d}\right\rangle+\sigma \langle v ,d\rangle\geq2\kappa\|\bar{d}\|^{2}-\kappa'\|d\|^{2}\geq(2\kappa-\kappa'\|F'(\bar{x})\|^{2})\|\bar{d}\|^{2}=\kappa\|\bar{d}\|^{2}
\end{align}
holds uniformly for all $\sigma>0$ sufficiently large. Considering further any $V\in J\operatorname{Prox}_{\sigma g^{*}}(\bar{u}+\sigma F(\bar{x}))$ and using \eqref{cod-cl} ensure the existence of $v_{1},\ldots,v_{t}\in D^{*}\operatorname{Prox}_{\sigma g^{*}}(\bar{u}+\sigma F(\bar{x}))(d)$ such that $Vd=\sum\limits_{i=1}^{t}\lambda_{i}v_{i}$, where $\lambda_{i}\in[0,1]$ and $\sum\limits_{i=1}^{t}\lambda_{i}=1$. This yields $\langle d,Vd\rangle=\sum\limits_{i=1}^{t}\lambda_{i}\langle v_{i},d\rangle$ and hence leads us together with \eqref{eq:main-theorem-7} to the relationships
\begin{align*}
\left\langle \nabla_{xx}^{2}L(\bar{x},\bar{u})\bar{d},\bar{d}\right\rangle+\sigma\left\langle F'(\bar{x})\bar{d} ,VF'(\bar{x})\bar{d}\right\rangle&=\left\langle \nabla_{xx}^{2}L(\bar{x},\bar{u})\bar{d},\bar{d}\right\rangle+\sigma \left\langle d ,Vd\right\rangle\\
&=\left\langle \nabla_{xx}^{2}L(\bar{x},\bar{u})\bar{d},\bar{d}\right\rangle+\sigma\sum\limits_{i=1}^{t}\lambda_{i}\langle v_{i},d\rangle\geq\kappa\|\bar{d}\|^{2}.
\end{align*} 
The latter inequality readily demonstrates that, whenever $\sigma$ is sufficiently large, all the matrices from generalized Hessian $\Bar{\partial}^{2}_{xx}\mathcal{L}_{\sigma}(\bar{x},\bar{u})$ are positive-definite, which thus justifies the claimed implication (ii)$\Longrightarrow$(iii). The other two implications (iii)$\Longrightarrow$(i) and (i)$\Longrightarrow$(ii) follow directly from  Lemma~\ref{lem-quadratic-budle-Upsilon} in conjunction with \cite[Theorem~3]{Rockafellar2023} and \cite[Theorem~5]{Rockafellar2023}, respectively. This completes the proof of our main theorem.
\end{proof}

In the {\em nondegenerate} case where the matrix $F'(\ox)$ in (COP) is of full rank, the characterization of Theorem~\ref{thm:main-thm-SSOSC-strong-variational-sufficient-cond}(ii) reduces to the second-order characterization of strong variational sufficiency established in \cite[Theorem~6.2]{Mordukhovich2023}, which yields the SSOSC characterization of this property for NLPs under the classical LICQ. Note that Theorem~\ref{thm:main-thm-SSOSC-strong-variational-sufficient-cond} does not require either nondegeneracy like  LICQ, or any other qualification condition even in the case of NLPs. As shown in Appendix~\ref{appendix-A} below, the most involved Assumption~\ref{assump-2} holds in the settings where $g$ in problem (COP) is either the indicator function of the SDP cone, or the nuclear norm function. The constructive calculations provided in Examples~\ref{example-sdp} and \ref{example-nuclear}, respectively, lead us to the explicit second-order characterizations of strong variational sufficiency for the two major classes in nonpolyhedral constrained optimization {\em without any constraint qualifications}.

\section{Conclusions and Future Research}\label{sec:conc}

The paper discovers a general framework in composite optimization, where strong variational sufficiency for local optimality admits complete second-order characterizations via an appropriate SSOSC and the positive-definiteness of generalized Hessian matrices of the associated augmented Lagrangian. The imposed assumptions and obtained characterizations cover polyhedral and nonpolyhedral problems of composite optimizations without any nondegeneracy and/or other constraint qualifications and are explicitly specified for SDP and nuclear norm-based problems.

Our future research will be partly concentrate on revealing additional classes of nonpolyhedral problems of composite optimization that exhibit explicit second-order characterizations of strong variational sufficiency for local optimality without any nondegeneracy assumptions. We also intend to characterize the notion of (not strong) variational sufficiency for nonpolyhedral optimization problems.  Our main attention will be paid to developing second-order numerical algorithms with establishing their local convergence rates based on the obtained second-order characterizations of variational and strong variational sufficiency in composite optimization.

\begin{appendices}
\section{Calculations for SDP Cone and Nuclear Norm Functions}\label{appendix-A}

This appendix consists of two examples, which provide the verification of Assumption~\ref{assump-2} and explicit calculations of the parameters involving in the obtained second-order characterizations of local optimality in the two major classes in nonpolyhedral composite optimization. We begin with the case where $g$ in (COP) is the {\em indicator function of the SDP cone}.
    
\begin{example}\label{example-sdp} {\rm Let $S^{m}_{+}$ be the cone of positive-semidefinite matrices in the space of quadratic symmetric matrices of dimension $m$. Given $X\in S^{m}_{+}$, pick $U\in\mathcal{N}_{S^{m}_{+}}(X)$ and denote $A:=X+U$. In what follows, we verify the fulfillment of Assumption~\ref{assump-2} for (COP) with $g=\delta_{S^{m}_{+}}$ at $X$ for $U$. To proceed, let $\sigma_{i}$ $(i=1,\ldots,m)$ be the eigenvalues of $A$, and let $\Lambda$ be the diagonal matrix of the eigenvalues of $A$. Consider the spectral decomposition $A=P\Lambda P^{T}$, where $P$ is an orthogonal matrix of the corresponding orthonormal eigenvectors, and define the index sets
\begin{align*}
\alpha:=\{i\,|\,\sigma_{i}>0\},\quad\beta:=\{i\,|\,\sigma_{i}=0\},\quad\gamma:=\{i\,|\,\sigma_{i}<0\}.
\end{align*}
Pick $W\in J\Pi_{S^{m}_{+}}(X+U)$ (resp.\ $W\in\Bar\nabla\Pi_{S^{m}_{+}}(X+U)$ from \eqref{Bsub}) and $D\in S^{m}$ with $V=WD$ and then deduce from \cite[Proposition~2.2]{Sun2006} the existence of $Z\in J\Pi_{S^{|\beta|}_{+}}(0)$ (resp.\ $Z\in\Bar\nabla\Pi_{S^{|\beta|}_{+}}(0))$ such that
\begin{align*}
V=P\widetilde{V}P^{T},\quad \widetilde{V}=\left(\begin{array}{ccc}
\widetilde{D}_{\alpha\alpha}&\widetilde{D}_{\alpha\beta}&\Sigma_{\alpha\gamma}\circ\widetilde{D}_{\alpha\gamma}\\\widetilde{D}_{\beta\alpha}&Z(\widetilde{D}_{\beta\beta})&0\\\Sigma_{\gamma\alpha}\circ\widetilde{D}_{\gamma\alpha}&0&0
\end{array}\right),
\end{align*}
where '$\circ$' denotes the Hadamard product, and where
\begin{align*}
\widetilde{D}:=P^{T}DP,\quad\Sigma_{ij}:=\frac{\max\{\sigma_{i},0\}+\max\{\sigma_{j},0\}}{|\sigma_{i}|+|\sigma_{j}|},\ i,j=1,\ldots,m,
\end{align*}
with $0/0:=1$. Selecting $\alpha'=\beta$ in \cite[Lemma~11]{Pang2003} gives us $\overline{W}\in\Bar\nabla\Pi_{S^{m}_{+}}(X+U)$ with  $Z\in\Bar\nabla\Pi_{S^{|\beta|}_{+}}(0)$ such that $Z(\widetilde{D}_{\beta\beta})=\widetilde{D}_{\beta\beta}$. The first part of Assumption~\ref{assump-2} follows now from identity
\begin{align*}
&\left\{\left.P\left(\begin{array}{ccc}
\widetilde{D}_{\alpha\alpha}&\widetilde{D}_{\alpha\beta}&\Sigma_{\alpha\gamma}\circ\widetilde{D}_{\alpha\gamma}\\\widetilde{D}_{\beta\alpha}&\widetilde{D}_{\beta\beta}&0\\\Sigma_{\gamma\alpha}\circ\widetilde{D}_{\gamma\alpha}&0&0
\end{array}\right)P^{T}\right|\;\mbox{ for all }\;D\in S^{m},\,\widetilde{D}=P^{T}DP\right\}\\
&=\bigcup_{W\in J\Pi_{S^{m}_{+}}(X+U)}\operatorname{rge}W=\operatorname{rge}\overline{W}.
\end{align*}
Moreover, for any $Y\in\bigcup\limits_{W\in J\Pi_{S^{m}_{+}}(X+U)}\operatorname{rge}W$, there exist $W\in J\Pi_{S^{m}_{+}}(X+U)$ and $D\in S^{m}$ satisfying $Y=WD$ and $\widetilde{Y}=P^{T}YP$. Consequently, we obtain
\begin{align*}
\langle Y,D-Y\rangle=\langle\widetilde{Y},\widetilde{D}-\widetilde{Y}\rangle&=2\langle\widetilde{Y}_{\alpha\gamma},\widetilde{D}_{\alpha\gamma}-\widetilde{Y}_{\alpha\gamma}\rangle+\langle Z(\widetilde{D}_{\beta\beta}),\widetilde{D}_{\beta\beta}-Z(\widetilde{D}_{\beta\beta})\rangle\\
&\geq2\langle\widetilde{Y}_{\alpha\gamma},\widetilde{D}_{\alpha\gamma}-\widetilde{Y}_{\alpha\gamma}\rangle=-2\sum_{i\in\alpha,j\in\gamma}\frac{\sigma_{j}}{\sigma_{i}}\left(\widetilde{Y}_{ij}\right)^{2},
\end{align*}
where the inequality follows from \cite[Proposition~3.2]{Tangwang2024}. We can select the same matrix $\overline{W}\in\Bar\nabla\Pi_{S^{m}_{+}}(X+U)$ as above for which there is $\overline{D}\in S^{m}$ with $Y=\overline{W}\overline{D}$, $P^{T}\overline{D}P:=(d_{ij})_{m\times m}$ satisfying
\begin{align*}
d_{ij}=\widetilde{Y}_{ij},\;\forall\,i,j\in\alpha\cup\beta,\quad d_{ij}=d_{ji}=\frac{\sigma_{i}-\sigma_{j}}{\sigma_{i}}\widetilde{Y}_{ij},\;\forall\,i\in\alpha,\,j\in\gamma,
\end{align*}
\begin{align*}
\langle Y,\overline{D}-Y\rangle=\min_{{D\in S^{m},Y=WD}\atop{W\in J\Pi_{S^{m}_{+}}(X+U)}}\langle Y,D-Y\rangle.
\end{align*}
Thus the function $g=\delta_{S^{m}_{+}}$ satisfies Assumption~\ref{assump-2} at $X$ for $U$.}
\end{example}

The second example concerns the case of the {\em nuclear norm function} $g$ in (COP). 

\begin{example}\label{example-nuclear} {\rm 
Let $p,q\in\mathbb N$ with $p\leq q$, and let $X\in\mathcal{R}^{p\times q}$. In whet follows, we intend to show that Assumption~\ref{assump-2} holds for the nuclear norm function $g=\|\cdot\|_{*}$ at $X$ for $U\in\partial\|X\|_{*}$. Take $A=X+U$ and consider the singular value decomposition $A=R\,[\Sigma\ 0]\,S^{T}$, where $R\in\mathcal{R}^{p\times p}$ and $S\in\mathcal{R}^{q\times q}$ are orthogonal matrices, and where $\Sigma=\operatorname{Diag}(\sigma_{1},\ldots,\sigma_{p})$ is the diagonal matrix of singular values of $A$ with $\sigma_{1}\geq\cdots\ge\sigma_{p}\ge 0$.  
		
By \cite[Theorem~3]{Ma2011}, the proximal mapping of the nuclear norm $\|\cdot\|_{*}$ at $A$ is given by
\begin{align*}
\operatorname{Prox}_{\|\cdot\|_{*}}(A)=\overline{R}\,[\Sigma_{h}\ 0]\,\overline{S}^{T},
\end{align*}
where $\Sigma_{h}=\operatorname{Diag}(h(\sigma_{1}),\ldots,h(\sigma_{p}))$ and $h(t):=\max\{t-1,0\}$. Partition $\overline{S}$ into two parts as $\overline{S}=[\overline{S}_{1}\ \overline{S}_{2}]$ with $\overline{S}_{1}\in\mathcal{R}^{q\times p}$ and $\overline{S}_{2}\in\mathcal{R}^{q\times(q-p)}$. Define the index sets
\begin{align*}
    \alpha:=\{1,\ldots,p\},\quad\alpha_{1}:=\{i\in\alpha\,|\,\sigma_{i}>1\},\quad\alpha_{2}:=\{i\in\alpha\,|\,\sigma_{i}=1\},\quad\alpha_{3}:=\{i\in\alpha
,|\,\sigma_{i}<1\}.
\end{align*}
It follows from \cite[Proposition~2]{JiangSunToh2013} that for any $W\in J\operatorname{Prox}_{\|\cdot\|_{*}}(X+U)$ (resp.\ $W\in\Bar\nabla\operatorname{Prox}_{\|\cdot\|_{*}}(X+U))$  and $D\in\mathcal{R}^{p\times q}$ with $V=WD$, we get the representation
\begin{align}\label{eq:partial-proximal-nuclear-norm-times-D}
V=\overline{R}\left[\left(\Omega^{\alpha}\circ D_{1}^{s}+\Omega^{\gamma}\circ D_{1}^{a}\right)\overline{S}_{1}^{T}+\left(\Omega^{\beta}\circ D_{2}\right)\overline{S}_{2}^{T}\right],
\end{align}
where $D_{1}^{s}:=\frac{D_{1}+D_{1}^{T}}{2}$, $D_{1}^{a}:=\frac{D_{1}-D_{1}^{T}}{2}$, $D_{1}:=\overline{R}^{T}D\overline{S}_{1}$, $D_{2}:=\overline{R}^{T}D\overline{S}_{2}$ with $\overline{R}^{T}D\overline{S}=(d_{ij})_{p\times q}$, and where $\Omega^{\alpha}\in\mathcal{R}^{p\times p}$, $\Omega^{\gamma}\in\mathcal{R}^{p\times p}$, $\Omega^{\beta}\in\mathcal{R}^{p\times(q-p)}$ are defined as
\begin{align*}
&\Omega^{\alpha}:=\left[\begin{array}{ccc}\Omega^{\alpha}_{\alpha_{1}\alpha_{1}} & \Omega^{\alpha}_{\alpha_{1}\alpha_{2}} & \Omega^{\alpha}_{\alpha_{1}\alpha_{3}}\\
(\Omega^{\alpha}_{\alpha_{1}\alpha_{2}})^{T} & \Omega^{\alpha}_{\alpha_{2}\alpha_{2}} & 0\\
(\Omega^{\alpha}_{\alpha_{1}\alpha_{3}})^{T} & 0 & 0\end{array}\right]\;\mbox{ with }\;
\Omega^{\alpha}_{ij}:=1\;\mbox{ for all }\;i\in\alpha_{1},\,j\in\alpha_{1}\cup\alpha_{2},\\
&\Omega_{ij}^{\alpha}:=\frac{\sigma_{i}-1}{\sigma_{i}-\sigma_{j}}\;\mbox{ for all }\;i\in\alpha_{1},\,j\in\alpha_{3},\quad\Omega_{ij}^{\alpha}:=\Omega^{\alpha}_{ji}\in[0,1]\ (\mbox{resp.}\ \{0,1\})\;\mbox{ for all }\;i,j\in\alpha_{2},\\
&\Omega^{\gamma}:=\left[\begin{array}{ccc}
\Omega^{\gamma}_{\alpha_{1}\alpha_{1}} & \Omega^{\gamma}_{\alpha_{1}\alpha_{2}} & \Omega^{\gamma}_{\alpha_{1}\alpha_{3}}\\ (\Omega^{\gamma}_{\alpha_{1}\alpha_{2}})^{T} & 0 & 0 \\ (\Omega^{\gamma}_{\alpha_{1}\alpha_{3}})^{T} & 0 & 0
\end{array}\right],\quad
\Omega_{ij}^{\gamma}:=\frac{h(\sigma_{i})+h(\sigma_{j})}{\sigma_{i}+\sigma_{j}}\;\mbox{ for all }\;i\in\alpha_{1},\,j\in\alpha,\\
&\Omega^{\beta}:=\left[\begin{array}{c}
\Omega^{\beta}_{\alpha_{1}\bar{\beta}}\\ 0
\end{array}\right],\quad\bar{\beta}:=\{1,\ldots,q-p\},\quad
\Omega_{ij}^{\beta}:=\frac{\sigma_{i}-1}{\sigma_{i}}\;\mbox{ for all }\;i\in\alpha_{1},\,j\in\bar{\beta}.		
\end{align*}
Since $A=R[\Sigma\ 0]S^{T}=\overline{R}[\Sigma\ 0]\overline{S}^{T}$, it follows from
\cite[Proposition~2.4]{ChenLiuSunToh2016} that there exist orthogonal matrices $Q_{1}\in\mathcal{R}^{|\alpha_{1}|\times |\alpha_{1}|}$, $Q_{2}\in\mathcal{R}^{|\alpha_{2}|\times|\alpha_{2}|}$, $Q_{3}\in\mathcal{R}^{|\alpha_{3}|\times |\alpha_{3}|}$, $Q_{4}\in\mathcal{R}^{(q-2p)\times(q-2p)}$, and $Q_{5}\in\mathcal{R}^{(q-2p)\times(q-2p)}$ for which we have the expressions
\begin{align*}
\overline{R}&=R\left[\begin{array}{cccc}
Q_{1} & 0 & 0 & 0\\
0 & Q_{2} & 0 & 0\\
0 & 0 & Q_{3} & 0\\
0 & 0 & 0 & Q_{4}
\end{array}\right],\quad
\overline{S}=S\left[\begin{array}{cccc}
Q_{1} & 0 & 0 & 0\\
0 & Q_{2} & 0 & 0\\
0 & 0 & Q_{3} & 0\\
0 & 0 & 0 & Q_{5}
\end{array}\right].
\end{align*}
Therefore, this brings us to the representation
\begin{align*}
&\overline{R}[Z_{1}(s)\ Z_{2}(s)]\overline{S}^{T}\\
&=R\left[\begin{array}{cccc}
Q_{1} & 0 & 0 & 0\\
0 & Q_{2} & 0 & 0\\
0 & 0 & Q_{3} & 0\\
0 & 0 & 0 & Q_{4}
\end{array}\right][\Omega^{\alpha}\circ D^{s}_{1}+\Omega^{\gamma}\circ D^{a}_{1}\ \ \Omega^{\beta}\circ D_{2}]\left[\begin{array}{cccc}
Q_{1}^{T} & 0 & 0 & 0\\
0 & Q_{2}^{T} & 0 & 0\\
0 & 0 & Q_{3}^{T} & 0\\
0 & 0 & 0 & Q_{5}^{T}
\end{array}\right]S^{T}.
\end{align*}
For any nonsingular matrices $P_{1}\in\mathcal{R}^{m\times m}$, $P_{2}\in\mathcal{R}^{n\times n}$, and $B,\widetilde{B},C\in\mathcal{R}^{m\times n}$ with all nonzero elements of $B$ and $B\circ\widetilde{B}=1_{m\times n}$, we select the matrix
\begin{align*}
\widetilde{C}:=\widetilde{B}\circ(P_{1}^{-1}(B\circ P_{1}CP_{2}))P_{2}^{-1}
\end{align*}
such that $B\circ P_{1}CP_{2}=P_{1}(B\circ \widetilde{C})P_{2}$. Hence there exists $\widetilde{D}$ with
\begin{align}\label{fixed-R-S}
\overline{R}[\Omega^{\alpha}\circ D^{s}_{1}+\Omega^{\gamma}\circ D^{a}_{1}\quad \Omega^{\beta}\circ D_{2}]\overline{S}^{T}=R[\Omega^{\alpha}\circ \widetilde{D}^{s}_{1}+\Omega^{\gamma}\circ \widetilde{D}^{a}_{1}\quad \Omega^{\beta}\circ \widetilde{D}_{2}]S^{T},
\end{align}
where $\widetilde{D}_{1}^{s}:=\frac{\widetilde{D}_{1}+\widetilde{D}_{1}^{T}}{2}$, $\widetilde{D}_{1}^{a}:=\frac{\widetilde{D}_{1}-\widetilde{D}_{1}^{T}}{2}$, $\widetilde{D}_{1}:=R^{T}\widetilde{D}S_{1}$, and $\widetilde{D}_{2}:=R^{T}\widetilde{D}S_{2}$. Let $\overline{R}:=R$, $\overline{S}:=S$ and then find $\widetilde{D}$ satisfying the equality \eqref{fixed-R-S}.
Thus there exists $\overline{W}\in\Bar\nabla\operatorname{Prox}_{\|\cdot\|_{*}}(X+U)$ with $\Omega^{\alpha}=\widehat{\Omega}^{\alpha}$, $\widehat{\Omega}_{ij}^{\alpha}=1$ as $i,j\in\alpha_{2}$, and 
\begin{align*}
\bigcup_{W\in J\operatorname{Prox}_{\|\cdot\|_{*}}(X+U)}\operatorname{rge}W&=\operatorname{rge}\overline{W}\\
&=\left\{\left.R\left[\left(\widehat{\Omega}^{\alpha}\circ D_{1}^{s}+\Omega^{\gamma}\circ D_{1}^{a}\right)S_{1}^{T}+\left(\Omega^{\beta}\circ D_{2}\right)S_{2}^{T}\right]\,\right|\,D\in\mathcal{R}^{p\times q}\right\},
\end{align*}
which ready justifies the first part of Assumption~\ref{assump-2}.

Furthermore, for any $Y\in\bigcup\limits_{W\in J\operatorname{Prox}_{\|\cdot\|_{*}}(X+U)}\operatorname{rge}W$, there exist $W\in J\operatorname{Prox}_{\|\cdot\|_{*}}(X+U)$ and $D\in\mathcal{R}^{p\times q}$ such that $Y=WD$. Then it follows from \eqref{eq:partial-proximal-nuclear-norm-times-D} that
\begin{align}\label{eq:minmal-inner-product-Y-D-Y}
\langle Y,D-Y\rangle&=\left\langle \left(\Omega^{\alpha}\circ D_{1}^{s}+\Omega^{\gamma}\circ D_{1}^{a}\right)S_{1}^{T}+\left(\Omega^{\beta}\circ D_{2}\right)S_{2}^{T},\right.\nonumber\\
&\quad\left.R^{T}D-(\left(\Omega^{\alpha}\circ D_{1}^{s}+\Omega^{\gamma}\circ D_{1}^{a}\right)S_{1}^{T}+\left(\Omega^{\beta}\circ D_{2}\right)S_{2}^{T})\right\rangle\nonumber\\
&=\left\langle[\Omega^{\alpha}\circ D_{1}^{s}+\Omega^{\gamma}\circ D_{1}^{a}\quad \Omega^{\beta}\circ D_{2}],R^{T}DS-[\Omega^{\alpha}\circ D_{1}^{s}+\Omega^{\gamma}\circ D_{1}^{a}\quad \Omega^{\beta}\circ D_{2}]\right\rangle\nonumber\\
&=\left\langle \Omega^{\alpha}\circ D_{1}^{s}+\Omega^{\gamma}\circ D_{1}^{a},D_{1}-(\Omega^{\alpha}\circ D_{1}^{s}+\Omega^{\gamma}\circ D_{1}^{a})\right\rangle + \left\langle \Omega^{\beta}\circ D_{2}, D_{2}-\Omega^{\beta}\circ D_{2}\right\rangle\nonumber\\
&=2\sum_{{i<j,i\in\alpha_{1}}\atop{j\in\alpha_{1}\cup\alpha_{2}}}\left(1-\frac{h(\sigma_{i})+h(\sigma_{j})}{\sigma_{i}+\sigma_{j}}\right)\frac{h(\sigma_{i})+h(\sigma_{j})}{\sigma_{i}+\sigma_{j}}\left(\frac{d_{ij}-d_{ji}}{2}\right)^{2}\nonumber\\
&\quad+2\sum_{{i\in\alpha_{1}}\atop{j\in\alpha_{3}}}\left[\frac{(\sigma_{i}-1)(1-\sigma_{j})}{(\sigma_{i}-\sigma_{j})^{2}}\left(\frac{d_{ij}+d_{ji}}{2}\right)^{2}+\frac{(\sigma_{i}-1)(1+\sigma_{j})}{(\sigma_{i}+\sigma_{j})^{2}}\left(\frac{d_{ij}-d_{ji}}{2}\right)^{2}\right]\nonumber\\
&\quad+\sum_{i\in\alpha_{2}}\Omega^{\alpha}_{ii}(1-\Omega^{\alpha}_{ii})d_{ii}^{2}+2\sum_{{i<j}\atop{i\in\alpha_{2},j\in\alpha_{2}}}\Omega^{\alpha}_{ij}(1-\Omega^{\alpha}_{ij})d_{ij}^{2}+\sum_{i\in\alpha_{1}}\frac{\sigma_{i}-1}{\sigma_{i}^{2}}\sum_{j=p+1}^{q}d_{ij}^{2}\nonumber\\
&\geq2\sum_{{i<j,i\in\alpha_{1}}\atop{j\in\alpha_{1}\cup\alpha_{2}}}\left(1-\frac{h(\sigma_{i})+h(\sigma_{j})}{\sigma_{i}+\sigma_{j}}\right)\frac{h(\sigma_{i})+h(\sigma_{j})}{\sigma_{i}+\sigma_{j}}\left(\frac{d_{ij}-d_{ji}}{2}\right)^{2}+\sum_{i\in\alpha_{1}}\frac{\sigma_{i}-1}{\sigma_{i}^{2}}\sum_{j=p+1}^{q}d_{ij}^{2}\nonumber\\
&\quad+2\sum_{{i\in\alpha_{1}}\atop{j\in\alpha_{3}}}\left[\frac{(\sigma_{i}-1)(1-\sigma_{j})}{(\sigma_{i}-\sigma_{j})^{2}}\left(\frac{d_{ij}+d_{ji}}{2}\right)^{2}+\frac{(\sigma_{i}-1)(1+\sigma_{j})}{(\sigma_{i}+\sigma_{j})^{2}}\left(\frac{d_{ij}-d_{ji}}{2}\right)^{2}\right]\nonumber
\end{align}
\begin{align}
&=\left\langle \widehat{\Omega}^{\alpha}\circ D_{1}^{s}+\Omega^{\gamma}\circ D_{1}^{a},D_{1}-(\widehat{\Omega}^{\alpha}\circ D_{1}^{s}+\Omega^{\gamma}\circ D_{1}^{a})\right\rangle + \left\langle \Omega^{\beta}\circ D_{2}, D_{2}-\Omega^{\beta}\circ D_{2}\right\rangle\nonumber\\
&=2\sum_{{i<j,i\in\alpha_{1}}\atop{j\in\alpha_{1}\cup\alpha_{2}}}\left(\frac{\sigma_{i}+\sigma_{j}}{h(\sigma_{i})+h(\sigma_{j})}-1\right)\left(\frac{\widetilde{Y}_{ij}-\widetilde{Y}_{ji}}{2}\right)^{2}\\	&\quad+2\sum_{{i\in\alpha_{1}}\atop{j\in\alpha_{3}}}\left[\frac{1-\sigma_{j}}{\sigma_{i}-1}\left(\frac{\widetilde{Y}_{ij}+\widetilde{Y}_{ji}}{2}\right)^{2}+\frac{1+\sigma_{j}}{\sigma_{i}-1}\left(\frac{\widetilde{Y}_{ij}-\widetilde{Y}_{ji}}{2}\right)^{2}\right]+\sum_{i\in\alpha_{1}}\frac{1}{\sigma_{i}-1}\sum_{j=p+1}^{q}\widetilde{Y}_{ij}^{2},\nonumber
\end{align}
where $\widetilde{Y}:=R^{T}YS$, and where $\widehat{\Omega}^{\alpha}$ satisfies $\widehat{\Omega}^{\alpha}_{ij}=1$ for all $i,j\in\alpha_{2}$. Using \eqref{eq:minmal-inner-product-Y-D-Y} allows us to choose the same matrix $\overline{W}\in\Bar\nabla
\operatorname{Prox}_{\|\cdot\|_{*}}(X+U)$ as above such that there exists $\overline{D}\in\mathcal{R}^{q\times q}$ with $R^{T}\overline{D}S=(d_{ij})_{p\times q}$ for which we have
\begin{align*}
d_{ij}&=\frac{\widetilde{Y}_{ij}+\widetilde{Y}_{ji}}
{2}+\frac{\sigma_{i}+\sigma_{j}}{h(\sigma_{i})+h(\sigma_{j})}\frac{\widetilde{Y}_{ij}-\widetilde{Y}_{ji}}{2}\;\mbox{ if }\;i\in\alpha_{1},\,j\in\alpha_{1}\cup\alpha_{2},\\
d_{ji}&=\frac{\widetilde{Y}_{ij}+\widetilde{Y}_{ji}}{2}-\frac{\sigma_{i}+\sigma_{j}}{h(\sigma_{i})+h(\sigma_{j})}\frac{\widetilde{Y}_{ij}-\widetilde{Y}_{ji}}{2}\;\mbox{ if }\;i\in\alpha_{1},\,j\in\alpha_{1}\cup\alpha_{2},\\
\frac{d_{ij}+d_{ji}}{2}&=\widetilde{Y}_{ij}\;\mbox{ if }\;i,j\in\alpha_{2},\\
d_{ij}&=\frac{\sigma_{i}-\sigma_{j}}{\sigma_{i}-1}\frac{\widetilde{Y}_{ij}+\widetilde{Y}_{ji}}
{2}+\frac{\sigma_{i}+\sigma_{j}}{\sigma_{i}-1}\frac{\widetilde{Y}_{ij}-\widetilde{Y}_{ji}}{2}\;\mbox{ if }\;i\in\alpha_{1},\,j\in\alpha_{3},\\
d_{ji}&=\frac{\sigma_{i}-\sigma_{j}}{\sigma_{i}-1}\frac{\widetilde{Y}_{ij}+\widetilde{Y}_{ji}}{2}-\frac{\sigma_{i}+\sigma_{j}}{\sigma_{i}-1}\frac{\widetilde{Y}_{ij}-\widetilde{Y}_{ji}}{2}\;\mbox{ if }\;i\in\alpha_{1},\,j\in\alpha_{3},\\	d_{ij}&=\frac{\sigma_{i}}{\sigma_{i}-1}\widetilde{Y}_{ij}\;\mbox{ if }\;i\in\alpha_{1},\,j\in\{p+1,\ldots,q\},
\end{align*}
where $\overline{W}\overline{D}=Y$ and the following equalities holds:
\begin{align*}
\min_{{D\in S^{m},Y=WD}\atop{W\in J\operatorname{Prox}_{\|\cdot\|_{*}}(X+U)}}\langle Y,D-Y\rangle&=\langle Y,\overline{D}-Y\rangle=2\sum_{{i<j,i\in\alpha_{1}}\atop{j\in\alpha_{1}\cup\alpha_{2}}}\left(\frac{\sigma_{i}+\sigma_{j}}{h(\sigma_{i})+h(\sigma_{j})}-1\right)\left(\frac{\widetilde{Y}_{ij}-\widetilde{Y}_{ji}}{2}\right)^{2}\\	&+2\sum_{{i\in\alpha_{1}}\atop{j\in\alpha_{3}}}\left[\frac{1-\sigma_{j}}{\sigma_{i}-1}\left(\frac{\widetilde{Y}_{ij}+\widetilde{Y}_{ji}}{2}\right)^{2}+\frac{1+\sigma_{j}}{\sigma_{i}-1}\left(\frac{\widetilde{Y}_{ij}-\widetilde{Y}_{ji}}{2}\right)^{2}\right]\\
&\quad+\sum_{i\in\alpha_{1}}\frac{1}{\sigma_{i}-1}\sum_{j=p+1}^{q}\widetilde{Y}_{ij}^{2}.
\end{align*}
This verifies the second part of Assumption \ref{assump-2} for $g=\|\cdot\|_{*}$.}
\end{example}
\end{appendices}

\bibliographystyle{amsplain}
\bibliography{ref.bib}

\providecommand{\bysame}{\leavevmode\hbox to3em{\hrulefill}\thinspace}
\providecommand{\MR}{\relax\ifhmode\unskip\space\fi MR }
\providecommand{\MRhref}[2]{%
  \href{http://www.ams.org/mathscinet-getitem?mr=#1}{#2}
}
\providecommand{\href}[2]{#2}
\begin{thebibliography}{10}

\bibitem{Bonnans2000}
J.~F. Bonnans and A.~Shapiro, \emph{Perturbation {A}nalysis of {O}ptimization
  {P}roblems}, Springer, New York, 2000.

\bibitem{ChenLiuSunToh2016}
C.~H. Chen, Y.-J. Liu, D.~F. Sun, and K.-C. Toh, \emph{A semismooth
  {N}ewton-{CG} dual proximal point algorithm for matrix spectral norm
  approximation problems}, Math. Program. \textbf{155} (2016), 435--470.

\bibitem{Clarke1990}
F.~H. Clarke, \emph{Optimization and {N}onsmooth {A}nalysis}, 2nd edition,
  Philadelphia, PA, 1990.

\bibitem{Helmut2025}
H.~Gfrerer, \emph{On second-order variational analysis of variational convexity
  of prox-regular functions}, Set-Valued Var. Anal. \textbf{35} (2025),
  https://doi.org/10.1007/s11228--025--00744--8.

\bibitem{JiangSunToh2013}
K.~F. Jiang, D.~F. Sun, and K.-C. Toh, \emph{Solving nuclear norm regularized
  and semidefinite matrix least squares problems with linear equality
  constraints}, pp.~133--162, Springer International Publishing, Heidelberg,
  2013.

\bibitem{Mordukhovich2023}
P.~D. Khanh, B.~S. Mordukhovich, and V.~T. Phat, \emph{Variational convexity of
  functions and variational sufficiency in optimization}, SIAM J. Optim.
  \textbf{33} (2023), 1121--1158.

\bibitem{Khanh2025}
\bysame, \emph{Coderivative-based {N}ewton methods in structured nonconvex and
  nonsmooth optimization}, arXiv:2403.04262 (2025).

\bibitem{KhanMordukhovich2025}
P.~D. Khanh, B.~S. Mordukhovich, V.~T. Phat, and L.~D. Viet,
  \emph{Characterizations of variational convexity and tilt stability via
  quadratic bundles}, J. Convex Anal., to appear, arXiv:2501.04629 (2025).

\bibitem{Ma2011}
S.~Ma, D.~Goldfarb, and L.~Chen, \emph{Fixed point and {B}regman iterative
  methods for matrix rank minimization}, Math. Program. \textbf{128} (2011),
  321–353.

\bibitem{Mohammadi2021}
A.~Mohammadi, B.~S. Mordukhovich, and M.~E. Sarabi, \emph{Parabolic regularity
  in geometric variational analysis}, Trans. Amer. Math. Soc. \textbf{374}
  (2021), 1711--1763.

\bibitem{Mohammadi2022}
\bysame, \emph{Variational analysis of composite models with applications to
  continuous optimization}, Math. Oper. Res. \textbf{47} (2022), 397--426.

\bibitem{Mohammadi2020}
A.~Mohammadi and M.~E. Sarabi, \emph{Twice epi-differentiability of
  extended-real-valued functions with applications in composite optimization},
  SIAM J. Optim. \textbf{30} (2020), 2379--2409.

\bibitem{MORDUKHOVICH1976}
B.~S. Mordukhovich, \emph{Maximum principle in problems of time optimal control
  with nonsmooth constraints}, J. Appl. Math. Mech. \textbf{40} (1976),
  960--969.

\bibitem{Mordukhovich2006}
\bysame, \emph{Variational {A}nalysis and {G}eneralized {D}ifferentiation, {I}:
  {B}asic {T}heory, {II}: {A}pplications}, Springer, Berlin, 2006.

\bibitem{Mordukhovich2018}
\bysame, \emph{Variational {A}nalysis and {A}pplications}, Springer, Cham,
  Switzerland, 2018.

\bibitem{Mordukhovich2024}
\bysame, \emph{Second-{O}rder {V}ariational {A}nalysis in {O}ptimization,
  {V}ariational {S}tability, and {C}ontrol {T}heory, {A}lgorithms,
  {A}pplications}, Springer, Cham, Switzerland, 2024.

\bibitem{Pang2003}
J.-S. Pang, D.~F. Sun, and J.~Sun, \emph{Semismooth homeomorphisms and strong
  stability of semidefinite and lorentz complementarity problems}, Math. Oper.
  Res. \textbf{28} (2003), 39--63.

\bibitem{Rockafellar1970}
R.~T. Rockafellar, \emph{Convex {A}nalysis}, Princeton University Press,
  Princeron, NJ, 1970.

\bibitem{Rockafellar2019a}
\bysame, \emph{Progressive decoupling of linkages in optimization and
  variational inequalities with elicitable convexity or monotonicity},
  Set-Valued Var. Anal. \textbf{27} (2019), 863--893.

\bibitem{Rockafellar2019}
\bysame, \emph{Variational convexity and the local monotonicity of subgradient
  mappings}, Vietnam J. Math. \textbf{47} (2019), 547–561.

\bibitem{Rockafellar2022}
\bysame, \emph{Convergence of augmented {L}agrangian methods in extensions
  beyond nonlinear programming}, Math. Program. \textbf{199} (2022), 375--420.

\bibitem{Rockafellar2023}
\bysame, \emph{Augmented {L}agrangians and hidden convexity in sufficient
  conditions for local optimality}, Math. Program. \textbf{198} (2023),
  159--194.

\bibitem{Rockafellar1998}
R.~T. Rockafellar and R.~J.-B. Wets, \emph{Variational {A}nalysis}, Springer,
  Berlin, 1998.

\bibitem{Sun2006}
D.~Sun, \emph{The strong second order sufficient condition and constraint
  nondegeneracy in nonlinear semidefinite programming and their applications},
  Math. Oper. Res. \textbf{31} (2006), 761--776.

\bibitem{Tangwang2024}
P.~P. Tang and C.~J. Wang, \emph{Perturbation analysis of a class of composite
  optimization problems}, arXiv:2401.10728v2 (2025).

\bibitem{DingChao2023}
S.~W. Wang, C.~Ding, Y.~J. Zhang, and X.~Y. Zhao, \emph{Strong variational
  sufficiency for nonlinear semidefinite programming and its implications},
  SIAM J. Optim. \textbf{33} (2023), 2988--3011.

\end{thebibliography}

\end{document}